\newtheorem{theorem}{Theorem}[section]
\newtheorem{corollary}[theorem]{Corollary}
\newtheorem{lemma}[theorem]{Lemma}
\newtheorem{remark}[theorem]{Remark}
\newcommand\restr[2]{\ensuremath{\left.#1\right|_{#2}}}
\newcommand*\rfrac[2]{{}^{#1}\!/_{#2}}
\begin{document}  
\title[On Steenrod $\mathbb{L}$-homology]{ 
On Steenrod $\mathbb{L}$-homology, generalized manifolds, and surgery} 
\author[F. Hegenbarth and D. Repov\v{s}]{ 
Friedrich Hegenbarth and Du\v{s}an Repov\v{s}}
\address{Dipartimento di Matematica "Federigo Enriques", 
Universit\` a degli studi di Milano, 
20133 Milano, Italy}
\email{friedrich.hegenbarth@unimi.it} 
\address{Faculty of Education and Faculty of Mathematics and Physics, University of Ljubljana \&
Institute of Mathematics, Physics and Mechanics, 
1000 Ljubljana, Slovenia}
\email{dusan.repovs@guest.arnes.si}
\begin{abstract}   
  The aim of this paper
is to show the importance
of the Steenrod construction of homology theories
for the disassembly process in surgery on a
 {\it generalized} $n$-manifold $X^n$,
in order to produce
 an element of  generalized homology theory, which is basic for calculations.
 In particular, 
 we show how to construct an element of the $n$-th Steenrod homology group $H^{st}_{n} (X^{n}, \mathbb{L}^+),$  
where $\mathbb{L}^+$
is the connected covering spectrum of the periodic surgery spectrum $\mathbb{L}$,
avoiding the use of the {\it geometric} splitting procedure, which is standardly used in surgery on {\it topological} manifolds.
\end{abstract} 
\subjclass[2010]{  
Primary 
55N07,
55R20,
57P10,
57R67;
Secondary 
18F15,
55M05,
55N20, 
57P05,
57P99,
57R65}
\keywords{ Poincar\'{e} duality complex,
generalized manifold,
Steenrod $\mathbb{L}-$homology, 
periodic surgery spectrum $\mathbb{L}$,
fundamental complex,
$\mathbb{L}$-homology class,
Quinn index} 
\dedicatory{Dedicated to the memory of Professor Andrew Ranicki (1948-2018)} 
\maketitle
\section{Introduction}\label{S:Intro} 
In order to study global objects it can be useful to decompose them into similar smaller pieces.
 This process of {\it disassembly} also applies to surgery theory. 
 If one does it in an appropriate way, it produces an element of a generalized homology theory, which is basic for calculations. 
 Here, ``appropriate'' means ``semisimplicially'' defined spectra (this holds for all spectra considered in the paper).
 
 Geometrically, one uses transversality to attain the goal.
  This works well 
    for 
   {\it PL topological}
   manifolds,
    but it does not work
   for {\it generalized} manifolds. 
  The aim of this paper
is to show that for generalized manifolds
an appropriate tool
 to overcome this problem is
the Steenrod construction of homology theory.

Steenrod homology is a homology theory which is
very
appropriate for compact metric spaces which have certain bad local
properties.
Generalized Steenrod homology theory has been well presented  by 
Ferry~\cite{Ferry95}
(for more cf. also Milnor~\cite{Milnor95}). 
A rigorous development of this theory was given earlier
by
Kahn, Kaminker, and Schochet~\cite{KaKaSc77}. 

The underlying spectra of 
 homology theory which we
 shall consider are $\mathbb{\Omega}^{N}$, $\mathbb{\Omega}^{PD}$, $\mathbb{\Omega}^{NPD}$, and $\mathbb{L}$.
They are defined simplicially, in terms of adic objects
 (cf.  
 Nicas~\cite{Nicas82},
  Quinn~\cite{Quinn69},
  and
   Ranicki~\cite{Ranicki92}). 
   Objects in $\mathbb{\Omega}^{N}$ (resp. $\mathbb{\Omega}^{PD}$) are adic normal spaces (resp. adic Poincar\'{e} duality complexes), and objects in $\mathbb{\Omega}^{NPD}$ are adic normal spaces with boundaries being adic Poincar\'{e} duality complexes
    (cf. Quinn~\cite{Quinn72}). 
 
 Our main interest will be the periodic surgery spectrum $\mathbb{L}$ with 
$$\mathbb{L}_0 = \mathbb{Z} \times \rfrac{G}{TOP}$$
 and its connected covering spectrum $\mathbb{L}\left<1\right>$, which we shall denote by $\mathbb{L}^+$. Elements of
  $\mathbb{L}^+$ are adic surgery problems (cf. Nicas~\cite{Nicas82}), and there is a fiber sequence of spectra 
 $$\mathbb{L}^+ \rightarrow \mathbb{L} \rightarrow \mathbb{K}(\mathbb{Z}, 0),$$ where
  $\mathbb{K}(\mathbb{Z}, 0)$
  is the Eilenberg-MacLane spectrum.

Steenrod homology is defined on compact metric spaces $X$, 
and we write $H^{st}_* (X, \mathbb{\mathcal{S}})$, where $\mathbb{\mathcal{S}}$ is any one of the above spectra.
 If $X$ is a PL topological manifold,
  then $H^{st}_* (X, \mathbb{\mathcal{S}})$
   coincides with ordinarily defined generalized homology
    $H_* (X, \mathbb{\mathcal{S}})$.

It is important to note that   $\mathbb{L}^+$
 (resp.  $\mathbb{L}$)
 can be defined algebraically and that the following holds.
\begin{theorem}[Ranicki~\cite{Ranicki79,Ranicki92}]
 \label{F:spectra}
There is a map of spectra 
$$\mathbb{\Omega}^{NPD} \rightarrow \Sigma \mathbb{L}^+,$$
where $\Sigma \mathbb{L}^+$ is the suspension spectrum of $\mathbb{L}^+$ 
$($cf. Ranicki~\cite[p.287]{Ranicki79}$)$. 
Moreover, the induced morphism 
$$H_n (K, \mathbb{\Omega}^{NPD}) \rightarrow H_{n-1} (K, \mathbb{L}^+)$$
 is an isomorphism for $n \geq 4$, where $K$ 
 is a finite polyhedron $($cf. 
 Hausmann and Vogel~\cite{HaVo93},
 Jones~\cite{Jones73},
  Levitt~\cite{Levitt72},
 and
Quinn~\cite{Quinn72}$)$.
\end{theorem}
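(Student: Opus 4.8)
The plan is to deduce both assertions from Ranicki's algebraic theory of surgery together with the geometric bordism results quoted in the statement; since the theorem is due to Ranicki, what I would actually write is a guided recollection of those sources rather than a new argument.

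First I would construct the map of spectra as the \emph{algebraic surgery obstruction}. An adic normal space whose boundary is an adic Poincar\'{e} complex, of formal dimension $n$, carries an underlying $n$-dimensional normal algebraic complex over $\mathbb{Z}$; Ranicki's boundary construction, applied precisely when the geometric boundary is genuinely Poincar\'{e}, turns it into an $(n-1)$-dimensional quadratic Poincar\'{e} complex over $\mathbb{Z}$ --- the ``instant surgery obstruction'', the algebraic incarnation of the Wall obstruction of a degree-one normal map. This assignment is natural in the simplicial (adic) variable, so gluing it over all adic objects yields a map of semisimplicial spectra $\mathbb{\Omega}^{NPD} \rightarrow \Sigma\mathbb{L}^+$. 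One must check that it factors through the connective cover $\mathbb{L}^+$ rather than landing only in $\mathbb{L}$: the obstruction vanishes in low degrees (a normal pair of small formal dimension with Poincar\'{e} boundary carries none), and the summand $\pi_0(\mathbb{L}) = L_0(\mathbb{Z}) = \mathbb{Z}$ that would otherwise intervene is exactly what the fibre sequence $\mathbb{L}^+ \rightarrow \mathbb{L} \rightarrow \mathbb{K}(\mathbb{Z},0)$ removes. This reproduces the map of Ranicki~\cite{Ranicki79}.

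For the isomorphism on the homology of a finite polyhedron $K$, the plan is to fit the three bordism spectra into the cofibration sequence $\mathbb{\Omega}^{PD} \rightarrow \mathbb{\Omega}^{N} \rightarrow \mathbb{\Omega}^{NPD}$ --- ``a Poincar\'{e} complex is a closed normal space'', then ``a closed normal space is a normal pair with empty boundary'', with connecting map ``pass to the Poincar\'{e} boundary'' --- so that $\pi_n(\mathbb{\Omega}^{NPD})$, and more generally $H_n(K,\mathbb{\Omega}^{NPD})$, measures the failure of the forgetful transformation $\mathbb{\Omega}^{PD} \to \mathbb{\Omega}^{N}$ to be an equivalence, and then to identify this failure with $\mathbb{L}$-theory in the surgery range. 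Concretely I would prove the surgery-obstruction transformation $H_n(K,\mathbb{\Omega}^{NPD}) \rightarrow H_{n-1}(K,\mathbb{L}^+)$ to be an isomorphism for $n \geq 4$ by: (i) settling the case $K = \mathrm{pt}$ from the computation of the quadratic $L$-groups $L_*(\mathbb{Z})$ and the classical solvability of surgery problems rel boundary in high dimensions --- equivalently, an $n$-dimensional normal space with Poincar\'{e} boundary is null-bordant rel boundary exactly when its obstruction in $L_{n-1}(\mathbb{Z})$ vanishes, and every such obstruction is realised; and (ii) bootstrapping to an arbitrary finite polyhedron by induction over a cell (or handle) decomposition of $K$, using Mayer--Vietoris and excision on both sides. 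This is the content of Levitt~\cite{Levitt72}, Jones~\cite{Jones73}, Quinn~\cite{Quinn72} and Hausmann and Vogel~\cite{HaVo93}. From the spectrum-level viewpoint the same conclusion says that $\mathbb{\Omega}^{NPD} \rightarrow \Sigma\mathbb{L}^+$ is, in the relevant connectivity range, a homotopy equivalence, so that an Atiyah--Hirzebruch comparison over $K$ recovers the statement.

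The main obstacle is step (ii) together with the high-dimensional input of step (i): the Levitt--Jones--Quinn--Hausmann--Vogel identification of normal-versus-Poincar\'{e} bordism with quadratic $L$-theory. This is where the dimension restriction $n \geq 4$ genuinely enters --- through handle trading in the inductive step and through the low-dimensional pathologies of Poincar\'{e} complexes and of normal spaces --- and it is the one ingredient that is not a formal consequence of Ranicki's algebraic machinery. By contrast, the construction of the spectrum-level map in the first step and the homological bookkeeping over $K$ in the second are routine once that input is granted.
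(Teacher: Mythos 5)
The paper does not actually prove this theorem: it is quoted as a known result, with the map of spectra attributed to Ranicki~\cite{Ranicki79,Ranicki92} and the isomorphism to Levitt~\cite{Levitt72}, Jones~\cite{Jones73}, Quinn~\cite{Quinn72}, and Hausmann--Vogel~\cite{HaVo93}; no argument is supplied beyond the citations. Your outline is therefore competing with the cited literature rather than with an in-paper proof, and as a reconstruction of that literature it is essentially correct: the spectrum map is Ranicki's quadratic boundary construction (the ``instant surgery obstruction'' applied levelwise to adic normal pairs with Poincar\'e boundary), and the isomorphism rests on the Levitt--Jones--Quinn computation $\Omega^{NPD}_n \cong L_{n-1}(\mathbb{Z})$ for $n \geq 4$, propagated to finite polyhedra by comparing the two homology theories. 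One caveat you should make explicit rather than leave implicit in ``Atiyah--Hirzebruch comparison'': the cell-by-cell argument in degree $n$ over a polyhedron $K$ of dimension $d$ involves coefficient groups $\pi_q$ for $q$ ranging down to $n-d$, so the point case for $q \geq 4$ alone does not suffice; one also needs the low-degree behaviour of $\pi_q(\mathbb{\Omega}^{NPD})$ and $\pi_q(\Sigma\mathbb{L}^+)$ (the latter vanish for $q \leq 1$ by connectivity of $\mathbb{L}^+$, and the former must be controlled by the geometric sources you cite). With that supplied, your two-step plan is the standard proof and is consistent with the references the paper invokes.
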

Steenrod homology is related to locally finite homology.
\begin{theorem}[Ferry~\cite{Ferry95}, Milnor~\cite{Milnor95}]
\label{F:metricPair}
For every compact metric pair $(X, X')$,
 the natural homomorphism 
$$H^{st}_* (X, X', \mathbb{\mathcal{S}}) \rightarrow H^{lf}_* (X \setminus X', \mathbb{\mathcal{S}})$$
 is an isomorphism.
\end{theorem}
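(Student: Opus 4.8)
The plan is to reduce the statement to the strong excision (collapse) property of Steenrod homology by passing to a one‑point compactification. Since $X'$ is closed in the compact metric space $X$, the complement $Y:=X\setminus X'$ is a locally compact separable metric space whose one‑point compactification $Y^{+}$ is precisely the compact metrizable quotient $X/X'$. By the standard definition of locally finite homology with coefficients in a spectrum, $H^{lf}_{n}(Y,\mathcal S)\cong\widetilde H^{st}_{n}(Y^{+},\mathcal S)=\widetilde H^{st}_{n}(X/X',\mathcal S)$; if one instead uses the definition of $H^{lf}_{*}$ via the inverse system $\{H_{n}(Y,Y\setminus K,\mathcal S)\}_{K}$ over compacta $K\subseteq Y$, the first step is to check that for the cofinal exhaustion $K_{j}=X\setminus N_{j}$ by complements of a shrinking neighbourhood basis $N_{j}$ of $X'$ this system has the same $\varprojlim$ and $\varprojlim^{1}$ as the one computing $\widetilde H^{st}_{*}(X/X',\mathcal S)$. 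Either way, the theorem becomes the assertion that the natural collapse map induces an isomorphism $H^{st}_{n}(X,X',\mathcal S)\xrightarrow{\ \cong\ }\widetilde H^{st}_{n}(X/X',\mathcal S)$.

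To prove this I would use the polyhedral inverse‑limit description underlying Steenrod homology. Write $X=\varprojlim_{i}K_{i}$ as an inverse limit of finite polyhedra (e.g. nerves of finite open covers), and, using homotopy invariance and continuity of $H^{st}_{*}$, arrange that $X'=\varprojlim_{i}L_{i}$ for subpolyhedra $L_{i}\subseteq K_{i}$ compatible with the bonding maps (replacing the $L_{i}$ by regular neighbourhoods and passing to a cofinal subsystem if necessary); then $X/X'=\varprojlim_{i}K_{i}/L_{i}$. The defining Milnor sequences of the Steenrod theory give
$$0\to{\varprojlim_{i}}^{1}H_{n+1}(K_{i},L_{i},\mathcal S)\to H^{st}_{n}(X,X',\mathcal S)\to\varprojlim_{i}H_{n}(K_{i},L_{i},\mathcal S)\to0$$
and
$$0\to{\varprojlim_{i}}^{1}\widetilde H_{n+1}(K_{i}/L_{i},\mathcal S)\to\widetilde H^{st}_{n}(X/X',\mathcal S)\to\varprojlim_{i}\widetilde H_{n}(K_{i}/L_{i},\mathcal S)\to0 .$$
For each finite CW pair $(K_{i},L_{i})$ the ordinary generalized homology satisfies $H_{*}(K_{i},L_{i},\mathcal S)\cong\widetilde H_{*}(K_{i}/L_{i},\mathcal S)$ naturally in the bonding maps, so the two inverse systems are isomorphic; hence their derived limits agree and the five lemma identifies the middle terms. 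Tracking the identifications through the finite stages shows the resulting isomorphism is induced by the collapse map, which is what we want.

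The main obstacle, and the point that requires genuine care rather than formal manipulation, is the passage from the non‑compact space $Y=X\setminus X'$ to the compact space $X/X'$: the Steenrod machinery (inverse limits of polyhedra, $\varprojlim^{1}$ sequences) applies only to compact metric spaces, so $Y$ cannot be fed into it directly, and one must justify that $H^{lf}_{*}(Y,\mathcal S)$ really is computed by $\widetilde H^{st}_{*}(Y^{+},\mathcal S)$ — equivalently, that the pro‑system $\{H_{n}(Y,Y\setminus K_{j},\mathcal S)\}_{j}$ is pro‑isomorphic to the one arising from a polyhedral approximation of $X/X'$. A secondary subtlety is naturality: one should check at the level of representatives that the composite isomorphism coincides with the homomorphism of the statement, i.e. that restricting a relative Steenrod cycle on $(X,X')$ to the open set $Y$, where it has locally finite support, is exactly the collapse‑plus‑identification map. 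Both points are handled by the homotopy, exactness and (for the collapse) strong excision axioms of Steenrod homology, which hold for arbitrary closed subsets and in particular require no local niceness of $X'$.
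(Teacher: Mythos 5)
The paper does not actually prove this statement: Theorem~1.2 is quoted as background from Ferry and Milnor (ultimately Kahn--Kaminker--Schochet), so there is no internal proof to compare against. Your sketch follows the standard route of those references. The strong-excision half --- mapping the Milnor $\varprojlim$--$\varprojlim^{1}$ sequence of a polyhedral inverse system $\{(K_i,L_i)\}$ for $(X,X')$ to the corresponding sequence for $X/X'=\varprojlim_i K_i/L_i$, using $H_*(K_i,L_i,\mathcal S)\cong\widetilde H_*(K_i/L_i,\mathcal S)$ for finite CW pairs and the five lemma --- is correct and is exactly how strong excision for the Steenrod extension of a spectrum-valued theory is established.

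The soft spot is the other half, and your closing sentence ("both points are handled by the homotopy, exactness and strong excision axioms") overstates what the axioms give you. The identification $H^{lf}_{*}(X\setminus X',\mathcal S)\cong\widetilde H^{st}_{*}\bigl((X\setminus X')^{+},\mathcal S\bigr)$ is not an axiom of Steenrod homology; it depends entirely on the definition of $H^{lf}_{*}$, which in this paper is Ranicki's (Appendix~C): locally finite cycles on a locally finite simplicial complex. For a general compact metric pair the complement $X\setminus X'$ need not be a complex, so either the statement must be read (as the paper in effect does, via Theorem~1.3, where $T\setminus X$ is a CW-complex) with the implicit hypothesis that $X\setminus X'$ is a locally finite complex, or you must first say what $H^{lf}$ means for an arbitrary locally compact metric space. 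In the CW case the gap is fillable by the same mechanism you already used: exhaust $Y=X\setminus X'$ by finite subcomplexes $K_j$, use the $\varprojlim$--$\varprojlim^{1}$ sequence for the locally finite chain complex over $\{H_{*}(Y,Y\setminus K_j,\mathcal S)\}$, identify these terms with $\widetilde H_{*}(Y/(Y\setminus K_j),\mathcal S)$, note $Y^{+}=\varprojlim_j Y/(Y\setminus K_j)$, and apply the five lemma again; naturality of all the collapse maps then shows the composite is the homomorphism of the statement. With that step written out the argument is complete; without it, the central comparison is asserted rather than proved.
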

We shall apply this property only for $\mathbb{\mathcal{S}} = \mathbb{L}^+$.
The definition of $H^{lf}_* (\cdot, \mathbb{L}^+)$ can be found in
Ranicki~\cite[Appendix C]{Ranicki92}.

In order to verify the axioms of Steenrod homology theory, one has to use the following result.
\begin{theorem}[Ferry~\cite{Ferry95}, Milnor~\cite{Milnor95}]
\label{F:compact}
Any compact metric pair $(X, X')$ can be embedded into
 a compact metric pair $(T, T')$ so that
\begin{enumerate}
	\item $T$ and $T'$ are contractible;
	\item $T \setminus X$ is a CW-complex
	and 
	$T' \setminus X' \subset T \setminus X$ is a subcomplex.
\end{enumerate}
Moreover, the construction of $(T, T')$ is natural with respect to maps
between compact pairs
 $(X, X') \rightarrow (Y, Y')$.
\end{theorem}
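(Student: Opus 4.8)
The plan is to realize $(T,T')$ as the mapping telescope of an inverse system of finite polyhedra approximating $(X,X')$, with a cone glued onto the bottom of the telescope and with $X$ (resp.\ $X'$) attached at the infinite end. First I would invoke Freudenthal's theorem: every compact metric space is homeomorphic to the inverse limit of a sequence of finite simplicial complexes with simplicial bonding maps. Concretely, pick a cofinal sequence $\mathcal{U}_1,\mathcal{U}_2,\ldots$ of finite open covers of $X$, each refining the previous one and with mesh tending to $0$, let $K_i=N(\mathcal{U}_i)$ be the nerves and $p_i\colon K_{i+1}\to K_i$ the simplicial projections induced by the refinements; then the canonical map is a homeomorphism $X\cong\varprojlim(K_i,p_i)$. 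Restricting the covers $\mathcal{U}_i$ to $X'$ runs the same construction on $X'$ and produces, compatibly, a subsystem of subcomplexes $K'_i\subset K_i$ preserved by the bonding maps, with $X'\cong\varprojlim K'_i$.

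Next I would form the mapping telescope $\mathrm{Tel}$ of $K_1\xleftarrow{p_1}K_2\xleftarrow{p_2}\cdots$, obtained from $\coprod_{i\geq1}K_i\times[i-1,i]$ by gluing $K_{i+1}\times\{i\}$ to $K_i\times\{i\}$ along $p_i$; this is a countable, locally finite polyhedron that deformation retracts onto the bottom copy $K_1\times\{0\}$. Adjoining an external cone $CK_1$ on that bottom copy yields a contractible polyhedron $W:=\mathrm{Tel}\cup_{K_1}CK_1$, and I set $T:=W\cup X$, where a point of $X=\varprojlim K_i$ is attached as the common endpoint of its coherent family of rays in the telescope (morally, at height $+\infty$). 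A metric on $T$ making $X$ its genuine metric boundary is obtained by rescaling the slab at height $h$ by a factor decreasing to $0$ as $h\to\infty$; with this metric $T$ is compact, since a sequence either stays at bounded height and converges in a finite subcomplex, or escapes to infinity and, after diagonalizing over the $K_i$-coordinates, converges to a point of $X$. The same recipe applied to the subsystem gives $T':=W'\cup X'$ with $W':=\mathrm{Tel}'\cup_{K'_1}CK'_1$, using the same cone vertex, so that $T'\subset T$ and $X\cap T'=X'$. By construction $T\setminus X=W$ is a CW-complex and $T'\setminus X'=W'\subset W$ a subcomplex, which gives (2). For (1), reparametrize the height coordinate by $h\mapsto -1/(h+2)$ so that $X$ lies at a finite level, and then contract all levels linearly to the cone vertex, applying the bonding maps $p_i$ whenever the flow crosses from one cylinder of the telescope into the next; checking that this is jointly continuous, including along $X$, shows $T$ (and likewise $T'$) is contractible.

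The main obstacle is the last clause, naturality, since the inverse-limit representation above is not functorial in $X$: a map $f\colon(X,X')\to(Y,Y')$ need not induce maps of the chosen approximating systems. I would treat this in two stages. For an inclusion of compact pairs, restricting a cover system of the larger space yields a cover system of the smaller one together with honest simplicial inclusions of nerves, hence an inclusion of telescopes and cones sharing a common vertex, and therefore an inclusion of the resulting $(T,T')$'s; this already covers the excision-type situations. For a general $f$, uniform continuity (automatic by compactness) shows that the $f$-preimage of each cover $\mathcal{U}^Y_j$ of $Y$ is refined by $\mathcal{U}^X_i$ for all large $i$, so that $f$ induces a morphism of the approximating systems in the pro-category; rectifying this morphism, by choosing the index function $j\mapsto i(j)$ monotone and the comparison maps coherently, produces a map $\mathrm{Tel}_X\to\mathrm{Tel}_Y$ that extends over the cones and over $X$ and is well defined up to homotopy. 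Thus $(X,X')\mapsto(T,T')$ is functorial into the homotopy category, which is exactly what is needed to verify the axioms of Steenrod homology. I expect the two genuinely delicate points to be the coherence choices in this rectification and the joint continuity of the attaching construction at the limit fibre $X$.
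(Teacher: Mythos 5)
Your overall architecture --- nerves of a refining sequence of finite open covers, the mapping telescope of the simplicial projections, a cone attached at the bottom (which in the paper appears as the mapping cylinder to the nerve of the trivial cover $\mathcal{U}_0=\{X\}$, a point), and a compactification at the infinite end --- is exactly the construction of the open and closed fundamental complexes $F$ and $CF$ in Section~\ref{SS:3.1}, i.e.\ the paper's rendering of the Ferry--Milnor construction. Your treatment of naturality (restriction of cover systems for inclusions, rectification of a pro-morphism for general maps) is likewise in the spirit of what the paper does in Section~\ref{SS:Ch5}, where the comparison maps $q_j$ are built from the common refinements $\mathcal{U}''_j=\{U\cap U'\}$.

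The genuine gap is the sentence asserting that ``the canonical map is a homeomorphism $X\cong\varprojlim(K_i,p_i)$.'' With bonding maps $p_i$ that are merely simplicial projections induced by the refinements, this does not follow from Freudenthal's theorem, and it is precisely the delicate point that the paper is careful \emph{not} to assert. The canonical maps $\varphi_i\colon X\to K_i$ satisfy $p_i\circ\varphi_{i+1}\simeq\varphi_i$ only up to homotopy, so they do not even assemble into a map $X\to\varprojlim K_i$ without first gluing in those homotopies; after doing so (Theorem~\ref{6.1}) one obtains a map $\varphi$ that is an $\varepsilon$-map onto its image for every $\varepsilon>0$, but neither surjectivity onto $\varprojlim K_i$ nor injectivity comes for free. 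The paper flags this explicitly in Section~\ref{SS:remarks}(II): Milnor claims the two spaces are identical, Ferry only that they are strongly shape equivalent, and the authors call the relation ``insufficiently documented.'' Accordingly, the paper's construction takes $CF\setminus F=\varprojlim N(\mathcal{U}_j)$ as the boundary of the telescope and relates it to $X$ only through the map $\psi$ of Theorem~\ref{L:maps}, rather than identifying it with $X$. To make your $T$ literally contain $X$ with $T\setminus X$ a CW-complex, you must either choose the bonding maps far more carefully (this is where the actual content of Freudenthal's theorem lies --- e.g.\ working with covers adapted to an embedding of $X$ in the Hilbert cube so that the projections strictly commute with the $\varphi_i$), or settle for the weaker statement with $\varprojlim K_i$ in place of $X$. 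Everything downstream in your argument (the rescaled metric, compactness, contractibility, naturality) is fine once this identification is secured, but as written the key step is asserted rather than proved.
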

We shall
adopt the notation from
Ferry~\cite{Ferry95} and write $T \setminus X = \text{OFC}(X)$
for the open fundamental complex  of $X$, and $T = \text{CFC}(X)$
for  the closed fundamental complex of $X$.
Our construction  of OFC$(X)$ comes with a basepoint  $b_0\in X$.
We shall describe these fundamental complexes below, because we shall
 construct an element in
$$ H^{lf}_{n+2} (\text{OFC}(X^n)\setminus \{b_0\}, \mathbb{\Omega}^{NPD}),$$
 associated to a degree one normal map $f: M^{n} \rightarrow X^{n}$,
  where $X^{n}$ is a generalized $n$-manifold,
 and
  $b: \nu_{M^n} \to \xi$ is
  an appropriate bundle map. 
  
  More precisely, we have to fix a degree one normal map
  $$
  \{f_0,b_0\}
  : M_0 \to X 
  $$
  and associate to 
  $
  \{f,b\} 
  $
  an element in
$$
 H^{lf}_{n+2} (\text{OFC}(X^n)\setminus \{b_0\}, \mathbb{\Omega}^{NPD}),
 $$
  which we shall denote
  $
  \{f,b\} -
  \{f_0,b_0\}.
  $    
   
By the above theorems  we have the following chain of morphisms: 
$$
H^{lf}_{n+2} (\text{OFC}(X^n)\setminus \{b_0\}, \mathbb{\Omega}^{NPD}) \xrightarrow[]{\cong}
H^{st}_{n+2} (\text{CFC}(X^n), X^n {\amalg}\{b_0\}, \mathbb{\Omega}^{NPD}) \xrightarrow[]{\cong}$$
$$\xrightarrow[]{\cong}H^{st}_{n+1} (\text{CFC}(X^n), X^n  {\amalg}\{b_0\}, \mathbb{L}^+) \rightarrow
H^{st}_{n} (X^n  {\amalg}\{b_0\}, \mathbb{L}^+),
$$ 
i.e. $
  \{f,b\} -
  \{f_0,b_0\}
  $
determines an element 
$$
[ f,b] - [f_0,b_0] \in H^{st}_{n} (X^n  {\amalg}\{b_0\}, \mathbb{L}^+).
$$ 

If $X^n$ is a topological $n$-manifold
which carries a simplicial structure,
 then the construction of the element 
$$[ f,b] - [f_0,b_0]  \in H^{st}_{n} (X^n, \mathbb{L}^+) \cong H_{n} (X^n, \mathbb{L}^+)$$
  follows
  from the splitting procedure: 
  the
  surgery problem
$$(f,b) = (M^n \xrightarrow{f} X^n, \ \ \nu_{M^n} \xrightarrow{b} \xi)$$
  can be split into
   adic surgery problems which define
     $\left[ f,b \right]$ (cf.
       Hegenbarth and Repov\v{s}~\cite{HeRe14}
       and
     Ranicki~\cite{Ranicki92}). 
This is due to transversality with respect to a dual cell
structure on $X^n$.  
 
 It is the purpose of this paper to present a construction, based on Theorems ~\ref{F:spectra},~ \ref{F:metricPair}, and~ \ref{F:compact},
 to obtain an element of $H^{st}_{n} (X^n, \mathbb{L}^+)$
  which
avoids this geometric splitting.
 We point out that algebraic
  splitting is also possible (cf.
Pedersen, Quinn, and Ranicki~\cite{PeQuRa03}) and it leads to an identification of $\mathbb{L}$-homology groups with controlled Wall groups. 

We conclude the introduction by describing the structure of our
 paper.  
In Section 2 we shall
recall preliminary material about nerves $N(\mathcal{U})$
and canonical maps $$\varphi: X^n \rightarrow N(\mathcal{U})$$
between the
underlying space $X^n$
and
the
 nerve
$N(\mathcal{U}).$ 

 Section 3 will be devoted to the construction of appropriate fundamental complexes of $X^n$.
 Section 4 is the core of the paper:
  for any ENR Poincar\'{e} duality space $X^n$
         we shall
         apply
         Theorems ~\ref{F:spectra},~ \ref{F:metricPair}, and~ \ref{F:compact}
         to construct  the $\mathbb{L}$-homology class $$\left[f,b\right] \in H^{st}_n (X^n, \mathbb{L}^+)$$
         for an arbitrary surgery problem
$$(f,b) = (M^n \xrightarrow{f} X^n, \ \ \nu_{M^n} \xrightarrow{b} \xi).$$
However, we shall see that this class depends on the canonical surgery problem
 (cf. Lemma~\ref{C:Lemma 4.3}).

In Section 5 we shall present some improvements and give an outlook. 
Finally, in Section 6 we shall discuss selected remaining related problems.         

For more background information on Poincar\'{e} complexes, surgery theory, and generalized manifolds
 we refer the reader to e.g. 
\cite{BFMW96, CaHeRe16, Ferry10, HaVo93, Qui87, Yamasaki87, Wal67, Wal99, Wei02}.         
\section{Coverings, nerves, and canonical maps}\label{S:Coverings}
Throughout the paper we shall consider compact metric spaces $X$. 
Our main interest will be
 {\it closed generalized n-manifolds} $X^n$, i.e.~$X^n$ is 
 a {\it Euclidean neighborhood retract} (ENR) and a $\mathbb{Z}$-{\it homology $n$-manifold}, i.e. 
 $$H_* (X^n, X^n \setminus \{x\}) \cong H_* (\mathbb{R}^{n}, \mathbb{R}^{n} \setminus \{0\}) \ \ \hbox{\rm{for every}} \ x \in X^n
  $$ (cf. e.g.
 Cavicchioli, Hegenbarth, and Repov\v{s}~\cite{CaHeRe16}).

Open coverings $\mathcal{U} = \{U\}_{j\in J}$
will 
always
be
 assumed to be locally finite. We shall denote
  the  simplicial complex of $\mathcal{U}$
  by $N (\mathcal{U})$. The vertex corresponding to
    $U_j \in \mathcal{U}$
    will be denoted by
    $\left< U_j \right>$, and if $$\underset{0\leq i \leq k}{\cap}{U_{j_i}} \neq \emptyset$$
    then the $k$-simplex determined by $U_{j_0}, \dots, U_{j_k}\in \mathcal{U}$ 
    will be denoted by $$\sigma = \left< U_{j_0}, \dots, U_{j_k} \right> \in N(\mathcal{U}). $$

We shall abbreviate and write $$\underset{0\leq i \leq k}{\cap}{U_{j_i}}  = \cap \sigma.$$
We
shall
 also write $N(\mathcal{U})$ for its topological realization.
 The space $N (\mathcal{U})$ can be given the Whitehead or the metric topology. 
 However, since we shall only  consider locally finite coverings, these two topologies
 are   identical (cf. e.g. Dugundji \cite[p.99]{Dugundji73}). 
\subsection{The map \texorpdfstring{$\varphi: X \rightarrow N(\mathcal{U})$}{y: X -> N(U)}}\label{SS:map}
~\\~
Let $${\hbox{\rm mesh}}(\mathcal{U}) = {\hbox{\rm sup}}\{ {\hbox{\rm diam}} (U) | U \in \mathcal{U}  \},$$ where ${\hbox{\rm diam}}(U)$ 
denotes
 the diameter of $U \subset X$.
We shall now describe the first one of our canonical maps.

A partition of unity $\{\varphi_j\}_{j\in J}$ subordinate to $\mathcal{U}$ gives rise to the map $\varphi: X \rightarrow N (\mathcal{U})$ defined by
$$\varphi(x) = \sum_{j} \varphi_j (x) \left< U_j \right>.$$ 
If $\{\overline{\varphi}_j\}_{j \in J}$ is another partition subordinate to $\mathcal{U}$, it defines
the map
 $\overline{\varphi}: X \rightarrow N(\mathcal{U})$. The homotopies $$\{ t{\overline{\varphi}_j} +  (1-t){{\varphi}_j}\mid 0\leq t \leq 1\}_{j\in J}$$ then define a homotopy between $\overline{\varphi}$ and $\varphi$, i.e.~up to homotopy, 
 the map $\varphi$ is unique. 
\subsection{Maps induced by refinements}\label{SS:refinements}
~\\~
Next, we shall consider refinements of coverings and induced maps.
Let $$\mathcal{U}' = \{ {U}'_{j'} \}_{j' \in J'}$$ be a refinement of $\mathcal{U}$, i.e.~there is a map $s: J' \rightarrow J$ such that $${U}'_{j'} \subset {U}_{s(j')}  \ \ \hbox{\rm{for every}} \  j'\in J'.$$

Let $\varphi' : X \rightarrow N (\mathcal{U}')$ 
be a map as defined in Section~\ref{SS:map} by the partition of unity $\{ {\varphi}'_{j'} \}_{j' \in J'}$. We want to complete the diagram
\begin{center}
\begin{tikzpicture}[node distance=2cm, auto]
  \node (LU) {};
  \node (X) [node distance=1cm, below of=LU] {$X$};
  \node (NU') [right of=LU] {$N (\mathcal{U}')$};
  \node (NU) [below of=NU'] {$N (\mathcal{U})$};
  \draw[->, font=\small] (X) to node {$\varphi'$} (NU');
  \draw[->, font=\small] (X) to node [swap] {$\varphi$} (NU);
  \draw[->, font=\small, dashed] (NU') to node {} (NU);
\end{tikzpicture}
\end{center}
by a map indicated by the dashed
 line, so that it is homotopy commutative
  even in the controlled way. 
  
  We can
  get such maps from e.g. Hu~\cite[Theorem~8.1, p.146]{Hu65}: There exists a refinement $\mathcal{V}$ of $\mathcal{U}$, such that for every refinement $\mathcal{U}'$ of $\mathcal{V}$ there is a simplicial map $N (\mathcal{U}') \rightarrow N (\mathcal{U})$ such that
\begin{center}
\begin{tikzpicture}[node distance=2cm, auto]
  \node (LU) {};
  \node (X) [node distance=1cm, below of=LU] {$X$};
    \node (NU') [right of=LU] {$N (\mathcal{U}')$};
  \node (NU) [below of=NU'] {$N (\mathcal{U})$};
  \draw[->, font=\small] (X) to node {$\varphi'$} (NU');
  \draw[->, font=\small] (X) to node [swap] {$\varphi$} (NU);
  \draw[->, font=\small] (NU') to node {} (NU);
\end{tikzpicture}
\end{center}
commutes up to a homotopy $h_t$ with 
$$\{ h_t (x)  | 0 \leq t \leq 1 \} \subset \overset{\circ}{st}\left< U \right> \ \  {\hbox{\rm {for some}}} \ \  U \in \mathcal{U}.$$
 Such maps are called {\it bridge maps} in
 Hu~\cite{Hu65}
 and
  {\it projections} in
 Milnor~\cite{Milnor95}.
\subsection{Maps from nerves to the space (dominations)}
~\\~
We now describe the construction of maps $N (\mathcal{U}) \rightarrow X$,
 using the construction presented by Ferry~\cite[Theorems 29.7 and 29.9, Section~29]{Ferry92}.
Given $$\sigma = \left< U_{j_0} \dots U_{j_k} \right> \in N (\mathcal{U}),$$
 we pick a point $x_\sigma \in \cap \sigma$
  and define a noncontinuous map $$\rho : N (\mathcal{U}) \xrightarrow[]{} X \ \mbox{ by} \  \rho (\sigma) = x_\sigma.$$ 
 
Let $W \subset \mathbb{R}^m$ be an appropriate regular neighbourhood of some embedding $X \subset \mathbb{R}^m$.
Then
the map
 $$N (\mathcal{U}) \xrightarrow[]{\rho} X \subset W$$ can be approximated by a continuous map $$\psi' : N (\mathcal{U}) \rightarrow W.$$ The composition with the retraction $$\pi : W \rightarrow X$$
 then
 gives
 the map
  $$\psi = \pi \circ \psi' : N (\mathcal{U}) \rightarrow X.$$ By sufficiently subdividing $N (\mathcal{U})$
   one can achieve
   that
    $${\hbox{\rm {dist}}} (\psi, \rho) < \delta \ \  {\hbox{\rm {for arbitrary small}}} \ \  \delta > 0.$$

For a given $\varepsilon > 0$, one
then
 finds coverings $\mathcal{U}$ with ${\hbox{\rm mesh}}(\mathcal{U})$ sufficiently small, so that 
 $${\hbox{\rm {dist}}}({Id}_X, \rho \circ \varphi) < \varepsilon,$$
  and therefore $${\hbox{\rm {dist}}}({Id}_X, \psi \circ \varphi) < \varepsilon + \delta.$$ 
  
  By
 invoking 
Ferry  \cite[Corollary 29.9]{Ferry92}, 
we can then conclude that 
 $${Id}_X \ \ {\hbox{\rm and}} \ \  \psi \circ \varphi \ \ {\hbox{\rm are}} \ \  \varepsilon'\hbox{\rm{-homotopic}}.$$
  Beginning with an $\varepsilon' > 0$, one
 then
  finds coverings $\mathcal{U}$ of $X$ such that  
  $$\psi \circ \varphi \ \ {\hbox{\rm is}} \ \  \varepsilon'\hbox{\rm{-homotopic to}} \ \ {Id}_X .$$
  
This is well-known (cf. e.g.
Hu~\cite[Theorem~6.1, p.138]{Hu65}), but we
shall need some of the details from above in the sequel.  
\begin{theorem}[Hu~\cite{Hu65}]
Let $X$ be an absolute neighborhood retract (ANR). Then the following properties hold:
\begin{enumerate}
	\item[(i)] Given an open covering $\mathcal{U}$ of $X$, there exist maps 
	$$\varphi: X \rightarrow N(\mathcal{U}) \ \mbox{and} \  \psi : N(\mathcal{U}) \rightarrow X.$$
	\item[(ii)] Given $\varepsilon > 0$, there exists an open  covering
	 $\mathcal{U}_{\varepsilon}$ of $X$ such that the composite map
	$$
	X \xrightarrow{\varphi} N(\mathcal{U}_{\varepsilon}) \xrightarrow{\psi} X
	$$
	 is $\varepsilon$-homotopic to ${Id}_X$.
	\item[(iii)] Given $\mathcal{U}_{\varepsilon}$ as in (ii), there exist
	a refinement $\mathcal{U}'_{\varepsilon}$ and a
	map $N(\mathcal{U}'_{\varepsilon}) \rightarrow N(\mathcal{U}_{\varepsilon})$ such that 	
	the diagram and its  subdiagrams 
	\begin{center}
	\begin{tikzpicture}[node distance=2cm, auto]
	  \node (LU) {};
	  \node (X) [node distance=1cm, below of=LU] {$X$};
	  \node (NU') [right of=LU] {$N (\mathcal{U}'_{\varepsilon})$};
	  \node (NU) [below of=NU'] {$N (\mathcal{U}_{\varepsilon})$};
	  \node (RU) [right of=NU'] {};
	  \node (X2) [node distance=1cm, below of=RU] {$X$};
	  \draw[->, font=\small] (X) to node {$\varphi'$} (NU');
	  \draw[->, font=\small] (X) to node [swap] {$\varphi$} (NU);
	  \draw[->, font=\small] (NU') to node {} (NU);
	  \draw[->, font=\small] (NU') to node {$\psi'$} (X2);
	  \draw[->, font=\small] (NU) to node [swap] {$\psi$} (X2);
	\end{tikzpicture}
	\end{center}	
	are commutative up to $\varepsilon$-homotopy.
\end{enumerate}
\end{theorem}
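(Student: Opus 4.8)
The plan is to assemble all three assertions directly from the constructions already carried out in Sections~\ref{SS:map}--\ref{SS:refinements} and the domination construction above, keeping the control parameter under surveillance throughout; for the compact metric ENR's relevant in the sequel this uses the Euclidean embedding $X\subset\mathbb{R}^m$ entering the domination construction, while the general ANR case is Hu's original argument. For~(i) there is nothing new: $\varphi$ is the map built in Section~\ref{SS:map} from a partition of unity subordinate to $\mathcal{U}$, and $\psi=\pi\circ\psi'$ is the map built in the domination construction — pick representatives $x_\sigma\in\cap\sigma$, form the noncontinuous $\rho$, embed $X\subset\mathbb{R}^m$ with a regular neighbourhood $W$ and retraction $\pi:W\to X$, approximate $\rho$ by a continuous $\psi':N(\mathcal{U})\to W$ after subdividing $N(\mathcal{U})$ sufficiently, and set $\psi=\pi\circ\psi'$. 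No smallness is required, so~(i) is immediate.

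For~(ii) I would first choose $\mathcal{U}_\varepsilon$ with $\mathrm{mesh}(\mathcal{U}_\varepsilon)$ so small that $\mathrm{dist}(\mathrm{Id}_X,\rho\circ\varphi)$ is as small as desired: if $x\in U_j$ then $\varphi(x)$ lies in the closed star of $\langle U_j\rangle$, so $\rho\circ\varphi$ displaces each point by at most a controlled multiple of the mesh. Subdividing $N(\mathcal{U}_\varepsilon)$ finely enough gives $\mathrm{dist}(\psi,\rho)<\delta$, hence $\mathrm{dist}(\mathrm{Id}_X,\psi\circ\varphi)<\varepsilon+\delta$ with $\varepsilon+\delta$ arbitrarily small. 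Since $X$ is an ENR, Ferry~\cite[Corollary~29.9]{Ferry92} turns a sufficiently fine such estimate into an $\varepsilon$-homotopy $\psi\circ\varphi\simeq\mathrm{Id}_X$; concretely one begins with the target $\varepsilon$, extracts from Ferry's statement the distance $d_0$ below which two self-maps of $X$ are $\varepsilon$-homotopic, and then chooses mesh and subdivision so that $\varepsilon+\delta<d_0$. The only point to watch is this order of quantifiers.

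For~(iii) I would feed $\mathcal{U}_\varepsilon$ into the bridge-map machinery of Section~\ref{SS:refinements} (Hu~\cite[Theorem~8.1]{Hu65}): there is a refinement $\mathcal{V}$ of $\mathcal{U}_\varepsilon$ such that any further refinement $\mathcal{U}'_\varepsilon$ of $\mathcal{V}$ carries a simplicial map $p:N(\mathcal{U}'_\varepsilon)\to N(\mathcal{U}_\varepsilon)$ with $p\circ\varphi'\simeq\varphi$ by a homotopy whose tracks lie in one open star $\overset{\circ}{st}\langle U\rangle$; shrinking $\mathcal{U}'_\varepsilon$ further if necessary also makes $\mathrm{mesh}(\mathcal{U}'_\varepsilon)$ small. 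Applying $\psi$ to this homotopy makes the left triangle of the diagram $\varepsilon$-homotopy commutative. For the right triangle I would simply \emph{define} $\psi':=\psi\circ p$ — equivalently, run the domination construction on $N(\mathcal{U}'_\varepsilon)$ with the forced choices $x_{\sigma'}:=x_{p(\sigma')}$, so that $\rho'=\rho\circ p$ and the two continuous approximations agree up to a small homotopy. Then every subdiagram commutes up to an $\varepsilon$-homotopy by composing the two estimates together with~(ii) applied to $\mathcal{U}_\varepsilon$ and to $\mathcal{U}'_\varepsilon$.

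The genuine obstacle is exactly this coherence of the dominations across the refinement: $\psi$ and $\psi'$ are each defined only after an independent subdivision of the nerve and an independent continuous approximation of a noncontinuous map, so \emph{a priori} they need not be related by $p$, and one must arrange the vertex representatives, the subdivisions, and the approximations compatibly. The cleanest way around it is precisely the one indicated above: fix $\psi$ first and obtain $\psi'$ from it through the bridge map $p$, which forces all the required commutativities — with control — by construction, so that no further estimate beyond those of~(ii) and of Hu's Theorem~8.1 is needed.
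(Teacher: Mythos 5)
Your proposal is correct and follows essentially the same route as the paper, which offers no separate proof of this theorem beyond citing Hu and assembling exactly the ingredients you use: the partition-of-unity map $\varphi$, the domination $\psi=\pi\circ\psi'$ obtained by approximating the noncontinuous $\rho$ in a regular neighborhood, the mesh/subdivision estimates combined with Ferry's Corollary~29.9 for (ii), and Hu's bridge maps for (iii). Your additional observation about forcing coherence of $\psi$ and $\psi'$ through the bridge map (defining $\psi'$ via $\psi\circ p$, or choosing the representatives $x_{\sigma'}$ compatibly up to $\mathrm{mesh}(\mathcal{U}_\varepsilon)$) is a legitimate and welcome explicit treatment of a point the paper leaves implicit.
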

\section{Fundamental complexes}\label{SS:3.1}
Let $X$ be a compact metric space. As explained in Section~\ref{S:Coverings}, we can choose a covering $\mathcal{U}$ of $X$ such that the composite map
$$X \xrightarrow{\varphi} N(\mathcal{U}) \xrightarrow{\psi} X$$
 is an $\varepsilon$-equivalence for a given $\varepsilon > 0$. Then we 
can
 choose a refinement $\mathcal{U}'$ of $\mathcal{U}$ such that  the composite map
 $$X \xrightarrow{\varphi'} N(\mathcal{U}') \xrightarrow{\psi'} X$$
  is an $\varepsilon'$-equivalence for a given $\varepsilon' < \varepsilon$, etc. 
 
 In this way we
 can get
  a sequence of coverings 
  $\{ \mathcal{U}_1, \mathcal{U}_2, \dots \}$
   such that
    $\mathcal{U}_{j+1}$
    refines
    $\mathcal{U}_{j}$ for every
    $j\in \mathbb{N}$, and  the composite map
    $$X \xrightarrow{\varphi_j} N(\mathcal{U}_j) \xrightarrow{\psi_j} X$$ is an $\varepsilon_j$-equivalence with $\varepsilon_j \rightarrow 0$ for $j \rightarrow \infty$.     
Moreover, we have simplicial maps $$s_j : N(\mathcal{U}_{j+1}) \rightarrow N(\mathcal{U}_{j})$$ so that the diagram and its subdiagrams
\begin{center}
\begin{tikzpicture}[node distance=2cm, auto]
  \node (LU) {};
  \node (X) [node distance=1cm, below of=LU] {$X$};
  \node (NU') [right of=LU] {$N (\mathcal{U}_{j+1})$};
  \node (NU) [below of=NU'] {$N (\mathcal{U}_{j})$};
  \node (RU) [right of=NU'] {};
  \node (X2) [node distance=1cm, below of=RU] {$X$};
  \draw[->, font=\small] (X) to node {$\varphi_{j+1}$} (NU');
  \draw[->, font=\small] (X) to node [swap] {$\varphi_j$} (NU);
  \draw[->, font=\small] (NU') to node {$s_j$} (NU);
  \draw[->, font=\small] (NU') to node {$\psi_{j+1}$} (X2);
  \draw[->, font=\small] (NU) to node [swap] {$\psi_j$} (X2);
\end{tikzpicture}
\end{center}
commute up to homotopy.

We add to this sequence the trivial covering $\mathcal{U}_0 = \{ X \}$ with 
$$s_0 : N(\mathcal{U}_1) \rightarrow N(\mathcal{U}_{0}) = \left< X \right>$$
 the constant map. The union of the mapping cylinders of the simplicial  maps
  $\{ s_j \}_{j\in\{0,1,\dots\}}$, denoted as
$$F = \underset{j \geq 0}{\cup} N (\mathcal{U}_{j+1}) \times I \underset{s_j}{\cup}N (\mathcal{U}_{j})$$
 is an open fundamental complex of $X$.

Let 
$$F_l = \underset{l \geq j \geq 0}{\cup} N (\mathcal{U}_{j+1}) \times I \underset{s_j}{\cup}N (\mathcal{U}_{j}),$$
 i.e. $F_l \subset F_{l+1}$ is a deformation retract
 and
  $$r_l : F_{l+1} \rightarrow F_l$$
  is  the obvious retraction. Then $$CF = \underset{l}{\varprojlim} F_l$$ is a closed fundamental complex.  
  
Both complexes are contractible, $F \subset CF$, and 
$$CF \setminus F =  \underset{s_j}{\varprojlim} N (\mathcal{U}_{j+1}).$$ 
Identifying $N (\mathcal{U}_{j})$ with the mapping cylinder 
$$N (\mathcal{U}_{j+1}) \times I \underset{s_j}{\cup} N (\mathcal{U}_{j}),$$
 we can
 form $\underset{j}{\cap} N (\mathcal{U}_{j})$ and complete $F$
 by it, i.e. 
 $$\underset{j}{\cap} N (\mathcal{U}_{j}) = \underset{s_j}{\varprojlim} N (\mathcal{U}_{j+1}).$$  
\begin{theorem}\label{L:maps}
The maps $$\psi_j : N (\mathcal{U}_{j}) \rightarrow X$$ fit together to form  the
 map $$\psi: \underset{j}{\varprojlim} N (\mathcal{U}_{j}) \rightarrow X.$$
\end{theorem}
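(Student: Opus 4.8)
The statement asserts that the maps $\psi_j : N(\mathcal{U}_j) \to X$ assemble into a well-defined map on the inverse limit $\varprojlim_j N(\mathcal{U}_j)$. The plan is to use the universal property of the inverse limit: a compatible family of maps $N(\mathcal{U}_j) \to X$ does not exist on the nose (the diagrams involving $s_j$ only commute up to homotopy), so the first task is to replace the $\psi_j$ by genuinely compatible maps. I would proceed by induction on $j$, successively adjusting each $\psi_j$ within its homotopy class so that $\psi_j \circ s_j = \psi_{j+1}$ holds strictly. Concretely, one keeps $\psi_0$ as given; assuming $\psi_0,\dots,\psi_j$ have been chosen compatibly, one has a homotopy $\psi_j \circ s_j \simeq \psi_{j+1}$ from the commutativity-up-to-homotopy of the diagram in Section~\ref{SS:3.1}, and since $N(\mathcal{U}_{j+1})$ is a polyhedron and $X$ is an ANR, the homotopy extension property lets one push $\psi_{j+1}$ along this homotopy to a new map (still in the same homotopy class) that strictly satisfies $\psi_j \circ s_j = \psi_{j+1}$. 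This yields a strictly compatible family, and by the universal property of $\varprojlim$ we obtain $\psi : \varprojlim_j N(\mathcal{U}_j) \to X$.

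The second point to address is that the resulting $\psi$ genuinely ``fits together'' the original $\psi_j$, not merely the adjusted ones. For this I would observe that the adjustments are all by homotopies supported in controlled neighborhoods (of the type $\overset{\circ}{st}\langle U\rangle$ appearing in the bridge-map construction of Section~\ref{SS:refinements}), and that the $\varepsilon_j$-equivalence property $\psi_j \circ \varphi_j \simeq \mathrm{Id}_X$ with $\varepsilon_j \to 0$ is preserved up to a controlled error. Thus the map $\psi$ on $\varprojlim_j N(\mathcal{U}_j)$ is compatible, up to a uniformly small correction at each finite stage, with each $\psi_j$ composed with the projection $\varprojlim_k N(\mathcal{U}_k) \to N(\mathcal{U}_j)$; letting $j \to \infty$ identifies $\psi$ unambiguously. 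One should also note that $\varprojlim_j N(\mathcal{U}_j)$ is a compact metric space (an inverse limit of compact polyhedra), so continuity of $\psi$ follows from continuity of each $\psi_j$ together with the strict compatibility.

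The main obstacle is the passage from homotopy-commutativity to strict commutativity while keeping uniform control on the homotopies. Rectifying an inverse system of maps that commute only up to homotopy can fail in general; here it works because each $N(\mathcal{U}_j)$ is a finite-dimensional polyhedron, $X$ is an ANR, the bonding maps $s_j$ are simplicial (hence cofibrations after a mapping-cylinder replacement), and the homotopies come with the metric control recorded in Section~\ref{SS:refinements}. I would therefore spend the bulk of the argument verifying that the inductive HEP adjustment can be carried out with homotopies whose tracks remain inside stars of members of $\mathcal{U}_j$, so that the diameters of the correction tracks tend to $0$ as $j \to \infty$; granting that, the assembly of $\psi$ and its compatibility with the $\psi_j$ are formal. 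A cleaner alternative, which I would mention, is to invoke the naturality in Theorem~\ref{F:compact} and the construction of $\mathrm{OFC}(X)$ directly: the map $\psi$ is already implicit in the mapping-cylinder structure of $F$, and one merely restricts the evident retraction-type map $F \to X$ to the subspace $\bigcap_j N(\mathcal{U}_j) = \varprojlim_{s_j} N(\mathcal{U}_{j+1}) = \varprojlim_j N(\mathcal{U}_j)$.
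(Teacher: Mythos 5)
Your closing ``cleaner alternative'' is in fact the paper's proof: the homotopies $\psi_{j+1}\simeq\psi_j\circ s_j$ are packaged into maps $\Lambda_j$ on the mapping cylinders $N(\mathcal{U}_{j+1})\times I\underset{s_j}{\cup}N(\mathcal{U}_j)$ restricting to $\psi_{j+1}$ and $\psi_j$ on the two ends, these are glued into maps $F_l^{\circ}\to X$ compatible with the retractions $r_l$, and $\psi$ is obtained by passing to $\varprojlim_l F_l^{\circ}$ and restricting to $\varprojlim_j N(\mathcal{U}_j)$. Your primary route via strict rectification is genuinely different and does produce a continuous map on the inverse limit, but two points need repair. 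First, the homotopy extension property is not what drives the inductive step: there is no subpolyhedron of $N(\mathcal{U}_{j+1})$ on which the adjusted map is prescribed, so the only canonical strictification is to replace $\psi_{j+1}$ by $\psi_j\circ s_j$ outright; iterating, the whole family factors through $N(\mathcal{U}_1)$ and the resulting $\psi$ is just $\psi_1\circ p_1$, which ``fits together'' the $\psi_j$ only in a degenerate sense. Second, the claim that the $\varepsilon_j$-control is preserved up to a controlled error is not correct as stated: the correction made at stage $j$ has track size comparable to that of the homotopy $\psi_j\circ s_j\simeq\psi_{j+1}$, and these corrections accumulate, so the rectified $\psi_j'$ differs from $\psi_j$ by roughly $\sum_{k<j}\varepsilon_k$, which does not tend to $0$. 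To get the map one actually wants, namely $\psi(x)=\lim_j\psi_j(x_j)$ on a thread $(x_j)$, one should instead arrange (by refining the coverings) that the tracks of the comparison homotopies are summable, so that $(\psi_j\circ p_j)_j$ is uniformly Cauchy; this is precisely the control that the paper's telescope formulation encodes implicitly, and it is the point your argument should make explicit. For the bare existence statement of the theorem, either of your routes suffices.
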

\begin{proof}
Let 
$$h: N (\mathcal{U}_{j+1}) \times I \rightarrow X$$ 
be a homotopy between $\psi_{j+1}$ and $\psi_j \circ s_j$. It induces a map 
$$\Lambda_j : N (\mathcal{U}_{j+1}) \times I \underset{s_j}{\cup} N (\mathcal{U}_{j}) \rightarrow X$$
 which restricts to $$\psi_{j+1} \ \mbox{ on} \ N (\mathcal{U}_{j+1}) \times \{0\} \ \  \mbox{ and}  \ \ \psi_j \  \mbox{on} \ N (\mathcal{U}_{j}),$$ hence it can be glued to give maps
$$F^\circ_l = \underset{l \geq j \geq 1}{\cup} N (\mathcal{U}_{j+1}) \times I \rightarrow X.$$
Since the diagram
\begin{center}
\begin{tikzpicture}[node distance=2.5cm, auto]
  \node (N1) {$N (\mathcal{U}_{j+1}) \times I \underset{s_j}{\cup} N (\mathcal{U}_{j})$};
  \node (M) [right of=N1] {};
  \node (N2) [right of=M] {$N (\mathcal{U}_{j})  = N (\mathcal{U}_{j}) \times \{0\} $};
  \node (X) [node distance=1.5cm, below of=M] {$X$};
  \draw[->, font=\small] (N1) to node {$r_{j}$} (N2);
  \draw[->, font=\small] (N1) to node {} (X);
  \draw[->, font=\small] (N2) to node {} (X);
\end{tikzpicture}
\end{center}
commutes, it induces a map $$\underset{l \geq 1}{\varprojlim} F^\circ_l \rightarrow X$$
whose restriction to $\underset{l \geq 1}{\varprojlim} N (\mathcal{U}_{l})$ 
then gives the map $\psi$.
\end{proof}
\section{Construction of $\mathbb{L}$-homology classes}\label{SS:3.2}
In this section,
 $X^n$ will
denote an oriented generalized $n$-manifold, $n\ge 5$, with
a fundamental class $$\left[ X \right] \in H_n (X^n, \mathbb{Z}).$$ 
Then $X^n$ has a Spivak normal fibration $\nu_{X^n}$ (cf. 
Quinn~\cite[Example 2.3]{Qui83}). 
Moreover, 
$\nu_{X^n}$
has topological reductions (cf. 
Ferry and Pedersen~\cite[Theorem 16.6]{Fe-Ped}).
We shall
consider a sequence of coverings $\{\mathcal{U}_j\}_{j\in\{0,1,\dots\}}$ as described in Section~\ref{SS:3.1}.
\begin{theorem}\label{L:mapGamma}
There is a map
$$\Gamma_j : X^n \times I \rightarrow N (\mathcal{U}_{j+1}) \times I \underset{s_j}{\cup} N (\mathcal{U}_{j}) $$
such that $\Gamma_j$ restricts to 
$$\varphi_{j+1} : X^n \times \{0\} \rightarrow N (\mathcal{U}_{j+1}) \times \{0\}$$
 and 
  $$\varphi_j : X^n \times \{1\} \rightarrow N (\mathcal{U}_{j}).$$
\end{theorem}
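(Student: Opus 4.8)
The plan is to build $\Gamma_j$ as a concatenation of two homotopies glued along the middle slice $N(\mathcal{U}_{j+1})$, using that $N(\mathcal{U}_{j+1})\times I \underset{s_j}{\cup} N(\mathcal{U}_j)$ is precisely the mapping cylinder of the simplicial map $s_j$. Recall from Section~\ref{SS:3.1} that the diagram relating $\varphi_{j+1}$, $\varphi_j$ and $s_j$ commutes only up to homotopy: there is a homotopy $H_j : X^n \times I \to N(\mathcal{U}_j)$ with $H_j(\cdot,0) = s_j \circ \varphi_{j+1}$ and $H_j(\cdot,1) = \varphi_j$, coming from the bridge-map property (it is in fact a controlled homotopy, moving points only within $\overset{\circ}{st}\langle U\rangle$ for some $U\in\mathcal{U}_j$, though that precision is not needed here). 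The idea is to use the first half of the interval $[0,\tfrac12]$ to run the "structural" half of the mapping cylinder — the canonical inclusion of $N(\mathcal{U}_{j+1})\times\{0\}$ sliding along $N(\mathcal{U}_{j+1})\times I$ down to $s_j\circ\varphi_{j+1} \in N(\mathcal{U}_j)$ — and the second half $[\tfrac12,1]$ to run $H_j$ inside the target copy $N(\mathcal{U}_j)\times\{0\} \subset N(\mathcal{U}_{j+1})\times I \underset{s_j}{\cup} N(\mathcal{U}_j)$.

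Concretely, write $q: N(\mathcal{U}_{j+1}) \times I \amalg N(\mathcal{U}_j) \to N(\mathcal{U}_{j+1}) \times I \underset{s_j}{\cup} N(\mathcal{U}_j)$ for the quotient map defining the mapping cylinder, so that $q(y,1) = q(s_j(y))$ for $y \in N(\mathcal{U}_{j+1})$. Then I would set
\[
\Gamma_j(x,t) =
\begin{cases}
q\bigl(\varphi_{j+1}(x),\, 2t\bigr), & 0 \le t \le \tfrac12,\\[4pt]
q\bigl(H_j(x,\, 2t-1)\bigr), & \tfrac12 \le t \le 1.
\end{cases}
\]
At $t = \tfrac12$ the two branches agree: the first gives $q(\varphi_{j+1}(x),1) = q(s_j(\varphi_{j+1}(x)))$ and the second gives $q(H_j(x,0)) = q(s_j\circ\varphi_{j+1}(x))$, so $\Gamma_j$ is well defined and continuous. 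At $t=0$ we get $q(\varphi_{j+1}(x),0)$, which is the canonical embedding of $N(\mathcal{U}_{j+1})\times\{0\}$, i.e. $\Gamma_j$ restricts to $\varphi_{j+1}$ on $X^n\times\{0\}$; at $t=1$ we get $q(H_j(x,1)) = q(\varphi_j(x))$, the image of $\varphi_j$ in the target copy of $N(\mathcal{U}_j)$, as required.

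The one point that needs care — and which I expect to be the only real obstacle — is producing the homotopy $H_j$ with the stated endpoints and, if one wants the construction to be compatible across $j$ (which the later sections will need), keeping track of the control estimates. This is exactly where the bridge-map property from Section~\ref{SS:refinements} and the homotopy commutativity of the big diagram in Section~\ref{SS:3.1} enter: the simplicial maps $s_j$ were chosen precisely so that $s_j \circ \varphi_{j+1} \simeq \varphi_j$ through a homotopy supported in open stars, and $H_j$ is that homotopy. Everything else — continuity of $\Gamma_j$, the gluing at $t=\tfrac12$, and the identification of the boundary restrictions — is a direct check using the defining relation of the mapping cylinder, so I would present it briskly. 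If one additionally wants $\Gamma_j$ to be compatible with $r_j$ and thus to assemble into a map on $X^n \times I$ with values in the whole fundamental complex $F$ (paralleling Theorem~\ref{L:maps}), that follows formally once the $H_j$ are chosen coherently, but the statement as given only asks for the single-level map, so the construction above suffices.
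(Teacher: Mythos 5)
Your construction is correct and is essentially the paper's own proof: the paper likewise forms the map $\overline{\varphi}_{j+1}=(\varphi_{j+1}\times Id)$ followed by the projection into the mapping cylinder (your first branch) and then concatenates it with the homotopy $g$ from $s_j\circ\varphi_{j+1}$ to $\varphi_j$ supplied by the homotopy-commutative diagram of Section~\ref{SS:3.1} (your $H_j$), only parametrizing over $X^n\times I\cup X^n\times[1,2]$ instead of splitting $[0,1]$ at $\tfrac12$. The explicit check of agreement at the gluing slice and of the two boundary restrictions is the same in both versions.
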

\begin{proof}
We consider the composite map 
$$\overline{\varphi}_{j+1} : X^n \times I \xrightarrow{\varphi_{j+1} \times Id} N (\mathcal{U}_{j+1})  \times I \rightarrow N (\mathcal{U}_{j+1}) \times I \underset{s_j}{\cup} N (\mathcal{U}_{j}).$$
It restricts to 
$$\varphi_{j+1}: X^n \times \{0\} \rightarrow N (\mathcal{U}_{j+1}) \times I \underset{s_j}{\cup} N (\mathcal{U}_{j})$$ and 
$$s_j \circ \varphi_{j+1} : X^n \times \{1\} \rightarrow N (\mathcal{U}_{j+1})  \times I \underset{s_j}{\cup} N (\mathcal{U}_{j}).$$

However,
 $$s_j \circ \varphi_{j+1} \simeq \varphi_j \ \mbox{ via  the
   homotopy} \  g: X^n \times I \rightarrow N (\mathcal{U}_{j}).$$ 
 Composing 
  $\overline{\varphi}_{j+1}$ and $g$~in the obvious way,
$$X^n \times I \cup X^n \times \left[ 1,2\right] \xrightarrow{\overline{\varphi}_{j+1} \cup g} N (\mathcal{U}_{j+1}) \times I \underset{s_j}{\cup} N (\mathcal{U}_{j}),$$
one gets the required map 
$$\Gamma_j : X^n \times I \simeq X^n \times I \cup X^n \times \left[ 1,2\right] \rightarrow N (\mathcal{U}_{j+1}) \times I \underset{s_j}{\cup} N (\mathcal{U}_{j}).$$
\end{proof}
Let us denote 
$$F_0 = \underset{j \geq 1}{\cup} N (\mathcal{U}_{j+1}) \times I \underset{s_j}{\cup} N (\mathcal{U}_{j}),$$
 i.e. $F_0 \sim F \setminus \{b_0\}$, where 
 $$b_0 \in N (\mathcal{U}_{1}) \times I \underset{s_0}{\cup} N (\mathcal{U}_{0})$$ is the base point
 of
  $N (\mathcal{U}_{0}).$
  Then we 
  get the following corollary.
\begin{corollary}\label{C:mapsLambda}
The maps $\Lambda_j$ and $\Gamma_j$
in
Theorems~\ref{L:maps} and~\ref{L:mapGamma} fit together to give maps
$$X^n \times \mathbb{R}_+ \xrightarrow{\Gamma} F_0 \xrightarrow{\Lambda} X^n$$
such that $\Lambda \circ \Gamma$ restricts to $$\psi_1 \circ \varphi_1 : X^n \times \{0\} \rightarrow X^n.$$
\end{corollary}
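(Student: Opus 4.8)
The plan is to assemble the two families of maps produced in Theorem~\ref{L:maps} and Theorem~\ref{L:mapGamma} into a single pair of maps defined on the infinite telescope $F_0$ and on $X^n \times \mathbb{R}_+$, and then to check compatibility on overlaps. First I would recall that $F_0$ is, by construction, the union of the mapping cylinders
$$
N (\mathcal{U}_{j+1}) \times I \underset{s_j}{\cup} N (\mathcal{U}_{j}), \qquad j \geq 1,
$$
glued successively along the copies of $N(\mathcal{U}_{j})$ shared by consecutive stages. The proof of Theorem~\ref{L:maps} already supplies maps $\Lambda_j$ on each such cylinder that restrict to $\psi_{j+1}$ on $N(\mathcal{U}_{j+1}) \times \{0\}$ and to $\psi_j$ on $N(\mathcal{U}_{j})$; since adjacent $\Lambda_j$ and $\Lambda_{j+1}$ agree on the common face $N(\mathcal{U}_{j+1})$ (both equal $\psi_{j+1}$ there), they glue to a well-defined continuous map $\Lambda : F_0 \to X^n$. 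This is just the restriction to $F_0$ of the map $\varprojlim F^\circ_l \to X^n$ constructed in Theorem~\ref{L:maps}, now read as a direct-limit (telescope) rather than inverse-limit statement, so no new work is needed beyond bookkeeping.

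Next I would do the analogous gluing on the domain side. Decompose $X^n \times \mathbb{R}_+$ as the union of the slabs $X^n \times [j-1, j]$ for $j \geq 1$, each of which is canonically homeomorphic to $X^n \times I$. On the $j$-th slab, Theorem~\ref{L:mapGamma} gives the map $\Gamma_j : X^n \times I \to N (\mathcal{U}_{j+1}) \times I \underset{s_j}{\cup} N (\mathcal{U}_{j}) \subset F_0$ with $\Gamma_j|_{X^n \times \{0\}} = \varphi_{j+1}$ (into $N(\mathcal{U}_{j+1}) \times \{0\}$) and $\Gamma_j|_{X^n \times \{1\}} = \varphi_j$ (into $N(\mathcal{U}_j)$). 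The point to verify is that on the shared boundary slice $X^n \times \{j\}$ the two descriptions match: coming from the $j$-th slab we land in $N(\mathcal{U}_j)$ via $\varphi_j$, and coming from the $(j+1)$-st slab (where the index shifts by one) we also land in $N(\mathcal{U}_j)$ via $\varphi_j$, because the ``$\varphi_{j'+1}$ at the $0$-end'' of slab $j+1$ is exactly $\varphi_j$ at the appropriate copy. Hence the $\Gamma_j$ glue to a continuous $\Gamma : X^n \times \mathbb{R}_+ \to F_0$. One should also note the reindexing convention implicit in the statement — the telescope is written with $j \geq 1$ so that the $0$-th slab of $X^n \times \mathbb{R}_+$ is attached at $N(\mathcal{U}_1)$ — and record that $\Gamma|_{X^n \times \{0\}} = \varphi_1 : X^n \times \{0\} \to N(\mathcal{U}_1) \subset F_0$.

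Finally I would compute the restriction of the composite. On the bottom slice $X^n \times \{0\}$ we have $\Gamma|_{X^n \times \{0\}} = \varphi_1$ with image in $N(\mathcal{U}_1)$, and $\Lambda$ restricted to $N(\mathcal{U}_1) \subset F_0$ equals $\psi_1$ by the defining property of $\Lambda_1$. Therefore $(\Lambda \circ \Gamma)|_{X^n \times \{0\}} = \psi_1 \circ \varphi_1$, as asserted. I expect the only genuinely delicate point — everything else is gluing of continuous maps along closed subspaces via the pasting lemma — to be the verification that the indices line up across consecutive stages so that the pieces are literally compatible on overlaps (rather than merely homotopic); this has to be tracked carefully because Theorem~\ref{L:mapGamma}'s $\Gamma_j$ involves a homotopy $g$ from $s_j \circ \varphi_{j+1}$ to $\varphi_j$, and one must make sure the choices of these homotopies used at stage $j$ and at stage $j+1$ are the same on the common face, which one can arrange by fixing the homotopies $g$ once and for all in the construction of the sequence $\{\mathcal{U}_j\}$.
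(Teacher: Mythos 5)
Your overall strategy is exactly the intended one: the paper states this corollary without proof, as an immediate consequence of gluing the $\Lambda_j$ along the shared copies of $N(\mathcal{U}_{j+1})$ (where adjacent pieces both restrict to $\psi_{j+1}$) and gluing the $\Gamma_j$ along the shared slices of $X^n\times\mathbb{R}_+$, and then reading off the composite on the bottom slice. Your treatment of $\Lambda$ and of the final computation $(\Lambda\circ\Gamma)|_{X^n\times\{0\}}=\psi_1\circ\varphi_1$ is correct, as is your remark that the homotopies $g$ entering the construction of the $\Gamma_j$ must be fixed once and for all.

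However, the overlap-matching step for $\Gamma$ is garbled as written, and as stated it would fail. With the orientation you implicitly use (slab $X^n\times[j-1,j]$ identified with $X^n\times I$ so that $j-1\leftrightarrow 0$ and $j\leftrightarrow 1$), slab $j$ sends $X^n\times\{j\}$ into $N(\mathcal{U}_j)$ via $\varphi_j$, while slab $j+1$ sends the same slice, via its $0$-end, into $N(\mathcal{U}_{j+2})\times\{0\}$ via $\varphi_{(j+1)+1}=\varphi_{j+2}$ --- not ``exactly $\varphi_j$'' as you claim --- so the pieces do not agree; moreover the bottom slice $X^n\times\{0\}$ would then carry $\varphi_2$, not $\varphi_1$. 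The fix is to reverse the orientation of each slab: identify $[j-1,j]$ with $I$ by $j-1\leftrightarrow 1$ and $j\leftrightarrow 0$. Then slab $j$ restricts to $\varphi_j$ into $N(\mathcal{U}_j)$ at $X^n\times\{j-1\}$ and to $\varphi_{j+1}$ into $N(\mathcal{U}_{j+1})\times\{0\}$ at $X^n\times\{j\}$, which in $F_0$ is identified with the copy of $N(\mathcal{U}_{j+1})$ receiving the $1$-end of slab $j+1$, also via $\varphi_{j+1}$; the pieces now literally agree, and $X^n\times\{0\}$ carries $\varphi_1$ into $N(\mathcal{U}_1)$, where $\Lambda$ restricts to $\psi_1$. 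With this correction your argument is complete and coincides with what the paper intends.
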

We can now construct a normal space with underlying space $F_0$ as follows:
Let 
$\xi$ be a topological reduction of $\nu_{X^n}$ and set
$\eta = \Lambda^* (\xi)$. Since 
$$\Lambda \circ \Gamma \sim \psi_1 \circ \varphi_1 \sim Id,$$
 we get $$\Gamma^* (\eta) \cong \xi \times \mathbb{R}_+.$$ Then 
 $$\beta: S^m \times \mathbb{R}_+ \xrightarrow{\alpha \times Id} T (\xi) \times \mathbb{R}_+ \cong T (\Gamma^* (\eta)) \rightarrow T (\eta)$$
  defines the structure map of the bundle $\eta$ over $F_0$. Here, $T (.)$ denotes the Thom space and 
  $$\alpha : S^m \rightarrow T (\xi) \sim T (\nu_{X^n})$$
   is the structure map of $(X^n, \nu_{X^n})$, where we assume
  that
   $X^n \subset S^m$. 
   Therefore $\xi$ is an $\mathbb{R}^{m-n}$-bundle over $X^n$.

Let
$$(f,b) = (f: M^n \rightarrow X^n, \ \ b: \nu_{M^n} \rightarrow \xi)$$
 be a surgery problem. 
 It
 defines a normal map 
 $$(F, B) = (M^n \times \mathbb{R}_+ \xrightarrow{f \times Id} X^n \times \mathbb{R}_+ \xrightarrow{\Gamma} F_0, \ \  \nu_{M^n} \times \mathbb{R}_+ \xrightarrow{b \times Id} \xi \times \mathbb{R}_+ \xrightarrow{\widetilde{\Gamma}} \eta),$$
  where $\widetilde{\Gamma}$ is the obvious bundle map covering $\Gamma$.
  
The mapping cylinder $M (F,B)$ is well-known to be a normal space with the boundary equal to 
$$
\partial M(F,B)=M^n \times \mathbb{R}_{+}\amalg F_0.
$$
We shall only consider the restriction of $(F,B)$ to 
$$M^n \times (0, \infty) \rightarrow F \setminus N (\mathcal{U}_{1}) \times I \underset{s_0}{\cup} N (\mathcal{U}_{0}),$$
 and also denote it by $(F,B)$. Since 
 $$F \setminus N (\mathcal{U}_{1}) \times I \underset{s_0}{\cup} N (\mathcal{U}_{0})$$
  is a locally finite complex,
   normal transversality is used to decompose $M(F,B)$ into adic normal complexes.

If $\xi'$ is another topological reduction of $\nu_{X^n}$
then the same construction gives $M(F',B')$.
One now glues
$$M(F,B)    \cup  -M(F',B') \ \ \hbox{\rm{along}} \ \  F_0$$ to obtain an element
$$\{f',b'\}-\{f,b\} \in   H^{lf}_{n+2} (F_0, \mathbb{\Omega}^{NPD}). $$ 
Here, $-M(F,B)$
indicates that the orientation
on $M^n$ is reversed.

By Theorem~\ref{F:spectra}, this is isomorphic to 
$$H^{lf}_{n+1} (F_0,\mathbb{L}^{+})$$
 which in turn,
  is by Theorem~\ref{F:metricPair}, isomorphic to 
 $$
 H^{st}_{n+1} (CF, \underset{j}{\varprojlim} N (\mathcal{U}_{j})  {\amalg}  N (\mathcal{U}_{1}) \times I \underset{s_0}{\cup} N (\mathcal{U}_{0}), \mathbb{L}^+).
 $$
  Under the homology boundary morphism it maps to an element in
   $$H^{st}_{n} ( \underset{j}{\varprojlim} N (\mathcal{U}_{j}), \mathbb{L}^+).$$
Finally,
$$\psi_* : H^{st}_{n} ( \underset{j}{\varprojlim}N (\mathcal{U}_{j}), \mathbb{L}^+) \rightarrow H^{st}_{n} ( X, \mathbb{L}^+)$$
 gives the desired element 
 $[f',b']-[f,b]$.
\begin{remark}\label{C:remark} We thank the referee for pointing out an error here in our previous version (we have claimed that $M(F,B)$ defines an element already  in  $H^{lf}_{n+2} (F_0,\mathbb{\Omega}^{NPD})$). 
\end{remark}
The element
 $\{f',b'\}-\{f,b\}$
 is represented by a compatible family of adic objects
  $$(\{f',b'\}-\{f,b\})_{\sigma}$$
  belonging to the semisimplicially defined
  spectrum 
$\mathbb{\Omega}^{NPD}$,
where $\sigma$ is a simplex in $F_0$.
Since $\sigma$ belongs to some 
$$N (\mathcal{U}_{l+1}) \times I \underset{s_l}{\cup} N (\mathcal{U}_{l})$$
one can break
 $\{f',b'\}-\{f,b\}$
into pieces
 $$
 \{f',b'\}_{l}-\{f,b\}_{l}\in
H^{lf}_{n+2} (N (\mathcal{U}_{l+1}) \times I \underset{s_l}{\cup} N (\mathcal{U}_{l})   ,\mathbb{\Omega}^{NPD})
= $$
$$
= H_{n+2} (N (\mathcal{U}_{l+1}) \times I \underset{s_l}{\cup} N (\mathcal{U}_{l})   ,\mathbb{\Omega}^{NPD}).
 $$
 
 We shall return to this splitting later on. A detailed construction of the adic elements
 $$(\{f',b'\}-\{f,b\})_{\sigma}$$ which 
 works also in our case 
 is given in
 K\" uhl, Macko, and Mole~\cite[Construction 11.3, p.236]{KMM13}.
 
 Before stating the main result of this section recall the following well-known fact
 (cf. Ferry and Pedersen \cite[Theorem 16.6]{Fe-Ped}). 
 \begin{lemma}\label{C:Lemma 4.3}
 The canonical topological reduction $\xi_0$ of the generalized manifold $X$ defines, up to a normal cobordism, a unique surgery problem
 $$
 (f_0, b_0): M_0 \to X,
$$
 called the canonical surgery problem.
 \end{lemma}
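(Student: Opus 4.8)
The plan is to identify the claimed statement with the classical fact that a topological reduction of the Spivak normal fibration determines a normal invariant, and then to trace through what the particular reduction $\xi_0$ being \emph{canonical} buys us. First I would recall that, for a generalized $n$-manifold $X$ with $n\ge 5$, the Spivak normal fibration $\nu_{X^n}$ exists by Quinn~\cite{Qui83} and admits topological reductions by Ferry and Pedersen~\cite[Theorem 16.6]{Fe-Ped}; among these, the canonical reduction $\xi_0$ is the one distinguished by the local index/controlled structure of $X$, i.e.\ the one compatible with the resolution obstruction data. Fixing $\xi_0$, the Thom construction gives a degree one normal map: one takes the structure map $\alpha\colon S^m\to T(\nu_{X^n})$ of the Spivak fibration, uses the reduction to lift $T(\nu_{X^n})$ to the Thom space of the vector (or $\mathrm{TOP}$-micro) bundle $\xi_0$, makes the resulting map $S^m\to T(\xi_0)$ transverse to the zero section, and restricts, obtaining $f_0\colon M_0\to X$ together with a bundle map $b_0\colon\nu_{M_0}\to\xi_0$. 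Standard Pontryagin--Thom theory (Browder, Wall) then shows that $(f_0,b_0)$ has degree one.

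Next I would establish the uniqueness up to normal cobordism. Two different choices in the Thom construction — a different transverse approximation of $\alpha$, or a different homotopy witnessing the reduction — differ by a homotopy in $T(\xi_0)$, and making that homotopy transverse to the zero section produces exactly a normal cobordism between the two resulting surgery problems. This is the usual argument that the normal invariant attached to a fixed reduction is well defined, and it goes through verbatim here because $X$ is an ENR (indeed an ANR) and the target Thom space is a genuine space, so relative transversality applies. The only subtlety is that $X$ itself is not a manifold, but transversality is being used on $S^m$ and in $T(\xi_0)$, never on $X$, so generalized-manifold pathology is irrelevant at this step.

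The step I expect to be the main obstacle is pinning down in what sense $\xi_0$ is ``\emph{the} canonical'' reduction and why this makes the surgery problem canonical rather than merely ``a'' surgery problem: the set of topological reductions of $\nu_{X^n}$ is a torsor over $[X,\mathrm{TOP}/G]$, and a priori one only gets a well-defined surgery problem once a reduction is chosen. I would resolve this by appealing to Ferry and Pedersen~\cite[Theorem 16.6]{Fe-Ped}, where the reduction coming from the (controlled) resolution data of $X$ is singled out, and by noting that its characterising property is preserved under the cobordisms above; thus the class of $(f_0,b_0)$ in the normal cobordism set $\mathcal{N}(X)$ depends only on $X$, not on auxiliary choices. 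With that identified, the statement follows: existence from the Thom construction, uniqueness up to normal cobordism from relative transversality, and the qualifier ``canonical'' from the canonicity of $\xi_0$ in~\cite{Fe-Ped}.
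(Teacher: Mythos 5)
Your proposal is correct in substance and follows the same basic idea as the paper: the Browder--Novikov/Pontryagin--Thom construction applied to the structure map of the reduced Spivak fibration, with uniqueness obtained by applying relative transversality to the homotopies interpolating between the various choices. The one genuine difference is where transversality is performed. You apply it directly to $S^m \to T(\xi_0)$ with $\xi_0$ a bundle over the generalized manifold $X$ itself, arguing that the non-manifold nature of $X$ is harmless because the transversality happens in $S^m$ and in the Thom space. The paper instead first invokes West's theorem to replace the compact ENR $X$ by a homotopy-equivalent finite complex $K$ (which is then a genuine $PD_n$-complex), transports the reduction and the structure map to $K$, and runs the Pontryagin--Thom construction over $K$; accordingly, its uniqueness step is phrased as independence of the choice of $K$ rather than of the transverse approximation. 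Your route is more direct but leans on topological transversality to the zero section of a bundle over an arbitrary compact ENR base --- true, but precisely the kind of point the authors chose to sidestep; their route stays within classical surgery over finite Poincar\'e complexes at the cost of the auxiliary equivalence $K \simeq X$. Finally, your discussion of what singles out $\xi_0$ among all reductions, while sensible, is not really part of the lemma's content: the lemma takes the canonical reduction as given (via Ferry--Pedersen) and asserts only that a fixed reduction determines a well-defined normal cobordism class.
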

 \begin{proof}
 Since $X$ is a compact metric space,
 it is homotopy equivalent to a finite complex $K$, hence $K$ is a $PD_n$-complex  (cf. West~\cite{West77}).
 There is a fiber homotopy equivalence
 $ \nu_K \sim  \nu_X$
 covering the homotopy equivalence 
 $K \sim  X$.
 The latter induces a reduction $\xi_0$ on $K$ and a structure map
   $$S^m \to T(\xi_0).$$
   
 Transversality applies here to define a surgery problem (the Pontryagin-Thom construction):
 $$ 
 (f_0, b_0): M_0 \to K\sim X.
 $$
 If $K'$ is another finite complex homotopy equivalent to $X$, it can be easily proved that the resulting surgery problem is normally cobordant to
  $(f_0, b_0)$.
 \end{proof} 
 In summary, we have obtained the following result.
 \begin{theorem}\label{C:Proposition 4.5}
 Let $X$ be an oriented generalized $n$-manifold, $n \ge 5$, 
 with the canonical reduction $\xi_0$ of $\nu_X$
 whose associated canonical surgery problem is
 $$(f_0, b_0): M_0 \to X.$$
 Then the procedure explained
 after Corollary~\ref{C:mapsLambda}
 yields
  for any degree one normal map
 $$(f, b): M \to X,$$
 a well-defined element
 $$ [f, b]-[ f_0, b_0]
 \in
 H^{st}_{n} (X,\mathbb{L}^{+}).
 $$ 
 \end{theorem}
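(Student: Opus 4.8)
The plan is to collect the construction carried out between Corollary~\ref{C:mapsLambda} and the statement, and then to verify that the resulting element does not depend on any of the auxiliary choices involved. First I would apply Lemma~\ref{C:Lemma 4.3} to the canonical reduction $\xi_0$ to fix the canonical surgery problem $(f_0,b_0)\colon M_0\to X$ up to normal cobordism. The given degree one normal map $(f,b)\colon M\to X$ carries a bundle map $b\colon\nu_M\to\xi$, so $\xi$ is a topological reduction of $\nu_{X^n}$, and I would feed the pair $(\xi_0,\xi)$ into the procedure following Corollary~\ref{C:mapsLambda}: over the open fundamental complex $F_0$ one forms the normal spaces $M(F_0,B_0)$ and $M(F,B)$, glues them along $F_0$ to obtain a class $\{f,b\}-\{f_0,b_0\}\in H^{lf}_{n+2}(F_0,\mathbb{\Omega}^{NPD})$, and transports it along the chain
\[
H^{lf}_{n+2}(F_0,\mathbb{\Omega}^{NPD})\ \xrightarrow{\cong}\ H^{lf}_{n+1}(F_0,\mathbb{L}^{+})\ \xrightarrow{\cong}\ H^{st}_{n+1}(CF,Z,\mathbb{L}^{+})\ \xrightarrow{\partial}\ H^{st}_{n}(\varprojlim_{j}N(\mathcal{U}_{j}),\mathbb{L}^{+})\ \xrightarrow{\psi_*}\ H^{st}_{n}(X,\mathbb{L}^{+}),
\]
where $Z=\varprojlim_{j}N(\mathcal{U}_{j})\amalg N(\mathcal{U}_{1})\times I\cup_{s_0}N(\mathcal{U}_{0})$, the first isomorphism is Theorem~\ref{F:spectra} (applied over the finite subcomplexes of $F_0$ and passed to the limit) and the second is Theorem~\ref{F:metricPair}; its image is by definition the asserted element $[f,b]-[f_0,b_0]$.

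Then I would establish well-definedness in three stages. (i) If $(f_0,b_0)$ is replaced by a normally cobordant surgery problem, the cobordism induces a normal cobordism of the glued objects over $F_0\times I$, hence leaves the class in $H^{lf}_{n+2}(F_0,\mathbb{\Omega}^{NPD})$ unchanged; the same reasoning disposes of the choices of the homotopies defining $\Gamma_j$ and $\Lambda_j$ in Theorems~\ref{L:maps} and~\ref{L:mapGamma} and of the bundle identification $\Gamma^{*}(\eta)\cong\xi\times\mathbb{R}_+$, since competing choices differ by a homotopy, hence by a normal cobordism of the associated adic NPD objects. (ii) If the covering tower $\{\mathcal{U}_j\}$ is replaced by a refining tower, Section~\ref{S:Coverings} supplies bridge maps between the nerves that are unique up to controlled homotopy, and these assemble, via Theorem~\ref{L:maps} and the identification $CF\setminus F=\varprojlim N(\mathcal{U}_{j+1})$, into a map of fundamental complexes compatible with the $\psi_j$; since any two towers possess a common refinement, the class is independent of the tower. (iii) Finally, the naturality clause of Theorem~\ref{F:compact} makes $\mathrm{CFC}(X)$ and the resulting Steenrod groups functorial, so the output does not depend on the embedding $X\subset S^m$ nor on the particular fundamental complex used.

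I expect the main obstacle to be stage (ii): one has to check that refining the covering tower induces the \emph{identity} on $H^{st}_{n}(X,\mathbb{L}^{+})$ after applying $\psi_*$, and that the adic NPD-class built over the refined fundamental complex corresponds, under the induced map, to the one built over the original complex. This rests on the controlled nature of the bridge maps---their tracks lie in stars of single members of the covering---together with the $\varepsilon_j$-equivalences $\psi_j\circ\varphi_j\simeq\mathrm{Id}_X$ with $\varepsilon_j\to 0$, which pin down $\psi_*$ independently of the tower; compatibility of the normal-space structures under refinement then follows from the semisimplicial (adic) description of $\mathbb{\Omega}^{NPD}$ used in K\"uhl, Macko, and Mole~\cite[Construction~11.3]{KMM13}. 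The remaining verifications are formal consequences of the cited isomorphisms and of the long exact sequence of a pair in Steenrod homology.
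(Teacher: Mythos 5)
Your proposal is correct and follows essentially the same route as the paper: the chain $H^{lf}_{n+2}(F_0,\mathbb{\Omega}^{NPD})\cong H^{lf}_{n+1}(F_0,\mathbb{L}^{+})\cong H^{st}_{n+1}(CF,Z,\mathbb{L}^{+})\xrightarrow{\partial}H^{st}_{n}(\varprojlim_j N(\mathcal{U}_j),\mathbb{L}^{+})\xrightarrow{\psi_*}H^{st}_{n}(X,\mathbb{L}^{+})$ applied to the glued object $M(F,B)\cup -M(F_0,B_0)$, with $(f_0,b_0)$ fixed by Lemma~\ref{C:Lemma 4.3}, is exactly the paper's construction. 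Be aware only that the paper's Theorem~\ref{C:Proposition 4.5} is stated ``in summary'' for a fixed covering tower, and your stages (i)--(iii) on independence of the normal bordism class and of the tower are precisely the content of Theorem~\ref{C:Theorem 5.1} and Lemmas~\ref{C:Lemma 5.2} and~\ref{C:Lemma 5.3}, which the paper establishes by the same means you sketch (normal cobordisms glued over $F_0\times I$, and common refinements $\mathcal{U}_j''$ with induced maps $q$ of fundamental complexes compared adic piece by adic piece).
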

\section{Improvements and the outlook}\label{SS:Ch5}
As in the previous chapter, let $X$ be a generalized manifold and $\mathcal{N}(X)$ the set of all normal bordism classes of degree one normal maps 
 $(f, b): M \to X$.
 Theorem~\ref{C:Proposition 4.5}
 can be improved to the following theorem. 
 \begin{theorem}\label{C:Theorem 5.1}
 The association
 $$(f,b) \rightsquigarrow   [f, b]-[ f_0, b_0]$$
  in Theorem~\ref{C:Proposition 4.5}
  defines a map
  $$ 
  t:\mathcal{N}(X) \to 
 H^{st}_{n} (X,\mathbb{L}^{+}).
  $$
 \end{theorem}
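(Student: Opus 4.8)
The plan is to show that the construction of Theorem~\ref{C:Proposition 4.5} factors through normal bordism, i.e. that whenever $(f,b)\colon M\to X$ and $(f_1,b_1)\colon M_1\to X$ are normally bordant degree one normal maps, the elements $[f,b]-[f_0,b_0]$ and $[f_1,b_1]-[f_0,b_0]$ in $H^{st}_n(X,\mathbb{L}^+)$ coincide. Since both differ from one another by a class that, by linearity of the gluing construction, equals $[f,b]-[f_1,b_1]$, it suffices to prove that $[f,b]-[f_1,b_1]=0$ whenever $(f,b)$ and $(f_1,b_1)$ are normally bordant. First I would fix a normal bordism: a manifold $W^{n+1}$ with $\partial W = M \amalg -M_1$, a map $G\colon W\to X$ restricting to $f$ and $f_1$, and a bundle map $b_W\colon \nu_W\to\xi$ restricting to $b$ and $b_1$ (using the \emph{same} topological reduction $\xi$ of $\nu_{X^n}$ for both ends — note the construction after Corollary~\ref{C:mapsLambda} is independent, up to the final $H^{st}$-class, of the choice of reduction, by the gluing argument already used there, so this is harmless).

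Next I would run the same cylinder-and-pullback machinery on the bordism. Applying $\Gamma$ and $\widetilde\Gamma$ to $W\times\mathbb{R}_+ \to X\times\mathbb{R}_+ \to F_0$ produces a normal map $(F_W,B_W)$ with underlying base a locally finite complex, and its mapping cylinder $M(F_W,B_W)$ is a normal space whose boundary over $F_0$ is exactly the disjoint union of the pieces $M(F,B)$ and $-M(F_1,B_1)$ used to define $[f,b]$ and $[f_1,b_1]$ (the remaining boundary component $W\times\mathbb{R}_+$ maps to $X\times\mathbb{R}_+$ and is discarded just as in the closed case). Gluing $M(F,B)\cup -M(F_1,B_1)$ along $F_0$ therefore bounds the normal space $M(F_W,B_W)$ rel the discarded part, so it represents $0$ in $H^{lf}_{n+2}(F_0,\mathbb{\Omega}^{NPD})$. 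Chasing $0$ through the isomorphism chain of Theorems~\ref{F:spectra} and~\ref{F:metricPair}, the boundary morphism, and $\psi_*$ — all of which are homomorphisms — yields $[f,b]-[f_1,b_1]=0$ in $H^{st}_n(X,\mathbb{L}^+)$, as required. Well-definedness of the target association on all of $\mathcal{N}(X)$ follows, and $t$ is the resulting map; I would also remark that it is a priori only a map of sets (additivity/affine-linearity over the Wall realization action is a separate matter, addressed elsewhere in the paper).

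The main obstacle I expect is verifying that the normal-space structure on $M(F_W,B_W)$ genuinely has the claimed boundary decomposition, and that the discarded component $W\times\mathbb{R}_+$ over $X\times\mathbb{R}_+ \simeq N(\mathcal{U}_0)$-cylinder can be excised without affecting the $H^{lf}$-class — i.e. that the restriction conventions (taking $M^n\times(0,\infty)\to F\setminus N(\mathcal{U}_1)\times I\cup_{s_0}N(\mathcal{U}_0)$) are compatible between the closed case and the bordism case. This is the same subtlety flagged in Remark~\ref{C:remark}, and handling it carefully requires invoking normal transversality over the locally finite complex $F_0$ and the adic decomposition of K\"uhl--Macko--Mole~\cite[Construction 11.3]{KMM13} applied to $G$ rather than to $f$; once the boundary identification is in place, the rest is a formal diagram chase through maps of homology groups.
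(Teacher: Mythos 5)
Your argument covers only half of what well-definedness requires. The element $[f,b]-[f_0,b_0]$ is produced via the open fundamental complex $F_0$, which is built from a chosen sequence of coverings $\{\mathcal{U}_j\}$, so the paper's proof explicitly splits into two independence checks: (i) invariance under normal bordism of $(f,b)$, and (ii) independence of the choice of the sequence $\{\mathcal{U}_j\}$. Your proposal addresses only (i), and there it is essentially the paper's Lemma~\ref{C:Lemma 5.2}: take a normal bordism $(g,c)\colon W\to X\times I$, form the mapping cylinder of $W\times\mathbb{R}_+\to X\times I\times\mathbb{R}_+\to F_0\times I$, glue with the corresponding cylinder for the reference problem, and read off a normal cobordism of the glued normal spaces. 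One caveat: the class is defined only as the glued difference against the fixed canonical problem $(f_0,b_0)$, not as an honest difference of two separately defined classes, so your reduction ``by linearity'' to showing $[f,b]-[f_1,b_1]=0$ should be replaced by a direct normal cobordism between $M(F,B)\cup -M(F_0,B_0)$ and $M(F_1,B_1)\cup -M(F_0,B_0)$, which is what the paper does; the geometric input is the same.

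What is missing entirely is (ii), and this is where most of the paper's proof lives. Given two sequences $\{\mathcal{U}_j\}$ and $\{\mathcal{U}'_j\}$ with fundamental complexes $F_0$ and $F'_0$, the paper forms the common refinement $\mathcal{U}''_j=\{U\cap U'\}$, builds from the refinement projections a map $q\colon (CF'',F''_0,\varprojlim_j N(\mathcal{U}''_j))\to(CF,F_0,\varprojlim_j N(\mathcal{U}_j))$, and proves (Lemma~\ref{C:Lemma 5.3}, by breaking the classes into the adic pieces supported on the mapping cylinders $N(\mathcal{U}_{j+1})\times I\cup_{s_j}N(\mathcal{U}_j)$ and matching the pulled-back bundles and Thom structure maps) that $q_*$ carries $\{f,b\}''-\{f_0,b_0\}''$ to $\{f,b\}-\{f_0,b_0\}$. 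It then uses the homotopy $\psi''_j\simeq\psi_j\circ p_j$ to conclude $\psi''_*(\langle f,b\rangle''-\langle f_0,b_0\rangle'')=\psi_*(\langle f,b\rangle-\langle f_0,b_0\rangle)$, and argues symmetrically for $\{\mathcal{U}'_j\}$ to get $[f,b]'-[f_0,b_0]'=[f,b]-[f_0,b_0]$. Without this step your $t$ is only defined on pairs (normal bordism class, covering sequence), not on $\mathcal{N}(X)$.
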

 \begin{proof}
 We first 
 have to show 
 that the construction does not depend on the choice of the normal 
 bordism 
 class of $(f,b)$
 and second, 
 that it does not depend on the choice of the sequence
 $\{\mathcal U_j\}$
 described in Chapter~\ref{SS:3.1}
 either. 
 \begin{lemma}\label{C:Lemma 5.2}
 Fix the sequence of coverings
 $\{\mathcal U_j\}$ of $X$
 described in 
 Chapter~\ref{SS:3.1}.
 Suppose that
  $$(f,b):M\to X$$
 is normally bordant to 
 $$
 (f', b'): M' \to X.
 $$
 Then
 $$
 \{f',b'\}-\{f_0,b_0\}=
   \{f,b\}-\{f_0,b_0\}\in
   H^{lf}_{n+2} (F_0,\mathbb{\Omega}^{NPD}).
 $$
 \end{lemma} 
 \begin{proof}
 Let 
 $$
 (g,c):W\to X\times I
 $$
 be a normal cobordism between $(f,b)$ and $(f',b')$.
 Consider also the product normal cobordism
 $$
 (g_0,c_0):M_0\times I \to X\times I.
 $$
 
 The mapping cylinders of the obvious normal maps
 $$
 (G,C):W \times \mathbb R_{+} \to X \times I \times \mathbb R_{+} \to F_0 \times I
 $$
and
 $$
  (G_0,C_0):M_0 \times I  \times \mathbb R_{+} \to X \times I \times \mathbb R_{+} \to F_0 \times I
 $$
 can be glued along
 $F_0 \times I$
 to give a normal cobordism between
 $$M(F,B)    \cup  -M(F_0,B_0)$$ 
 and
  $$M(F',B')    \cup  -M(F_0,B_0),$$
  implying the claim.
(For definitions of $M(F,B), -M(F_{0},B_{0}),$ and $M(F',B')$ see the previous section.) 
 \end{proof} 
 For the second step of the proof of Theorem \ref{C:Theorem 5.1}, we let
 $
 \{\mathcal U_j\}, 
 \{\mathcal U'_j\}
 $
 be two sequences defining
 $$
 F_0=\underset{j \ge 1}{\cup} N (\mathcal{U}_{j+1}) \times I \underset{s_j}{\cup} N (\mathcal{U}_{j})
 $$
 and
 $$
 F'_0= \underset{j \ge 1}{\cup} N (\mathcal{U}'_{j+1}) \times I \underset{s'_j}{\cup} N (\mathcal{U}'_{j}).
 $$
 Let 
 $$
 \mathcal U''_j=\{U \cap U' \mid \  U \in \mathcal U_j,  U' \in \mathcal U'_j \}.
 $$
 Observe that
 $$\hbox{\rm{mesh}} 
 (\mathcal U''_j)
  \le 
 \hbox{\rm{min}}
 \{
 \hbox{\rm{mesh}} (\mathcal U_j), \
 \hbox{\rm{mesh}} (\mathcal U'_j)
 \},
 $$
 where
 $
 \{\mathcal U''_j\}
 $
 is a sequence as described in Chapter~\ref{SS:3.1},
 defining $F''_0$ and the open (resp. closed) fundamental complex $F''$ (resp. $CF''$).
  
 Let 
 $
 (f,b):M\to X
 $
 be given.
 Our strategy
 will be to compare the elements
 $$
 \{f,b\}-\{f_0,b_0\}
 \in
   H^{lf}_{n+2} (F_0,\mathbb{\Omega}^{NPD})
 $$
 and
 $$
 \{f,b\}'-\{f_0,b_0\}'
 \in
   H^{lf}_{n+2} (F'_0,\mathbb{\Omega}^{NPD}),
 $$
 with
 $$
 \{f,b\}''-\{f_0,b_0\}''
 \in
   H^{lf}_{n+2} (F''_0,\mathbb{\Omega}^{NPD}).
 $$
 
 Since
 $
 \mathcal U''_j
 $
 refines 
 $
 \mathcal U_j,
 $
 there are maps
 $$
 p_j:N(\mathcal U''_j)
  \to
  N(  \mathcal U_j)
 $$
 such that the diagram 
\begin{center}
\begin{tikzpicture}[node distance=2.5cm, auto]
  \node (Xl) {$N(\mathcal U''_{j+1})$};
  \node (Fl) [right of=Xl] {$N(\mathcal U''_{j})$};
  \node (Xl1) [below of=Xl] {$N(\mathcal U_{j+1})$};
  \node (Fl1) [below of=Fl] {$N(\mathcal U_{j})$};
  \draw[->, font=\small] (Xl) to node [midway, above]{$s''_j$} (Fl);
  \draw[->, font=\small] (Xl) to node {$p_{j+1}$} (Xl1);
  \draw[->, font=\small] (Fl) to node {$p_j$} (Fl1);
  \draw[->, font=\small] (Xl1) to node [midway, below]{$s_j$} (Fl1);
\end{tikzpicture}
\end{center}
 commutes up to homotopy.
 
  The mapping cylinder  construction now applies 
 to obtain
  maps
 $$
 q_j: 
 N (\mathcal{U}''_{j+1}) \times I \underset{s''_j}{\cup} N (\mathcal{U}''_{j})
 \to
 N (\mathcal{U}_{j+1}) \times I \underset{s_j}{\cup} N (\mathcal{U}_{j})
 $$
 which restrict to 
 $p_{j+1}$ 
 (resp. $p_j$)
 on the boundary.
 Therefore they can be pieced together to yield a map
 $$
 q=\cup q_j:F''_0 \to F_0.
 $$
 
 The completion of this process then gives the map which we shall also denote by 
 $q$,
 $$
 q:(CF'', F''_0, \underset{j}{\varprojlim} N(\mathcal U''_j))
  \to 
  (CF, F_0, \underset{j}{\varprojlim} N(\mathcal U_j)).
 $$ 
 We shall also need the following lemma.
 \begin{lemma}\label{C:Lemma 5.3}
 Under the map
$$
q_{*} :
   H^{lf}_{n+2} (F''_0,\mathbb{\Omega}^{NPD})
\to
   H^{lf}_{n+2} (F_0,\mathbb{\Omega}^{NPD}),
 $$
 the element
 $$
 \{f,b\}''-\{f_0,b_0\}''
 $$
 maps to the element
 $$
 \{f,b\}-\{f_0,b_0\}.
 $$
 \end{lemma} 
 \begin{proof}
 To prove the lemma, we ``break up''
 $$
 \{f,b\}''-\{f_0,b_0\}''
 \ \mbox{(resp. } 
 \{f,b\}-\{f_0,b_0\})$$
   into pieces 
$$ \{f,b\}''_j-\{f_0,b_0\}''_j
 \ \mbox{ (resp. } 
 \{f,b\}_j-\{f_0,b_0\}_j
 )$$ 
 and we show that they correspond under
 $q_{*}$.
 
 To this end, consider the normal map
 $$
 (F_j, B_j):
 M \times [j,j+1]
 \xrightarrow{(f,b) \times Id}
 X \times [j,j+1] 
 \xrightarrow{\Gamma_j}
 N (\mathcal{U}_{j+1}) \times I \underset{s_j}{\cup} N (\mathcal{U}_{j}),
 $$
 where $B_j$
 is the obvious bundle map with the target in 
 $\Lambda^{*}_j(\xi).$
 As above, here $b:\nu_M \to \xi$
 is the bundle map of
 $(f,b):M\to X.$
 Observe that
 $$
 \restr{ \Lambda^{*}(\xi)}{N (\mathcal{U}_{j+1}) \times I \underset{s_j}{\cup} N (\mathcal{U}_{j})}
 \cong \Lambda^{*}_j(\xi).
 $$
 The mapping cylinder $M(F_j, B_j)$
 is then a normal complex with boundary. 
 We do the same for
 $$
 (f_0,b_0):M_0\to X
 $$
 and obtain
 $(F^{\circ}_j, B^{\circ}_j).$
 
 Then
 $$
 M(F_j, B_j)
 \cup
 -M(F^{\circ}_j, B^{\circ}_j)
 $$
 defines
 an element
 $$
 \{f,b\}_j-\{f_0,b_0\}_j
 \in
 H_{n+2}(N (\mathcal{U}_{j+1}) \times I \underset{s_j}{\cup} N (\mathcal{U}_{j}), \mathbb{\Omega}^{NPD}) .
 $$
 
 The inclusions
 $$
 N (\mathcal{U}_{j+1}) \times I \underset{s_j}{\cup} N (\mathcal{U}_{j}) \to F_0
 $$
 represent
 $
 \{f,b\}-\{f_0,b_0\}
 $
 as an infinite (locally finite) sum 
 $$
\underset{j} \Sigma (\{f,b\}_j-\{f_0,b_0\}_j).
 $$
 
 The same process yields a representation for 
 $
 \{f,b\}''-\{f_0,b_0\}''
 $
 as an infinite (locally finite) sum
 $$
 \underset{j} \Sigma (\{f,b\}''_j-\{f_0,b_0\}''_j). 
 $$
 
We now consider the following (homotopy) commutative diagram
\begin{center}
\begin{tikzpicture}[node distance=4.0cm, auto]
  \node (A) {$M\times I$};
  \node (B) [right of=A] {$X\times I$};
   \node (C) [right of=B] {$N (\mathcal{U}''_{j+1}) \times I \underset{s''_j}{\cup} N (\mathcal{U}''_{j}) $};
    \node (D) [right of=C] {$X$};
  \node (E) [below of=A] {$M\times I$};
  \node (F) [below of=B] {$X\times I$};
   \node (G) [below of=C] {$N (\mathcal{U}_{j+1}) \times I \underset{s_j}{\cup} N (\mathcal{U}_{j}) $};
  \node (H) [below of=D] {$X$};
  \draw[->, font=\small] (A) to node {$f\times Id$} (B);
  \draw[->, font=\small] (B) to node {$\Gamma''_j$} (C);
  \draw[->, font=\small] (C) to node {$\Lambda''_j$} (D);
  \draw[->, font=\small] (A) to node {$=$} (E);
  \draw[->, font=\small] (B) to node {$=$} (F);
  \draw[->, font=\small] (C) to node {$q_j$} (G);
  \draw[->, font=\small] (D) to node {$=$} (H);
  \draw[->, font=\small] (E) to node {$f\times Id$} (F);  
  \draw[->, font=\small] (F) to node {$\Gamma_j$} (G);
  \draw[->, font=\small] (G) to node {$\Lambda_j$} (H);
\end{tikzpicture}
\end{center}
First, we deduce from this diagram that
$$
q_{j}^{*} \Lambda_j^{*} (\xi)
\cong
 {{\Lambda}''_{j}}^{*} (\xi),
$$
hence $q_j$ can be covered  by a bundle map
$$
 \overline{q}_{j}: {{\Lambda}''_{j}}^{*} (\xi) \to {\Lambda}_{j}^{*}(\xi).
 $$
Next, we have the following
$$
 {{\Gamma}''_{j}}^{*}({{\Lambda}''_{j}}^{*} (\xi))  \cong  \xi \times I 
 \ \ \     \mbox{and}     \ \ \
   {\Gamma}_{j}^{*}({\Lambda}_{j}^{*} (\xi))  \cong  \xi \times I.
$$
From this we obtain the bundle maps
$$
b'': \nu_M \times I \xrightarrow{b \times Id} \xi \times I \cong  {{\Gamma}''_{j}}^{*}({{\Lambda}''_{j}}^{*} (\xi)) 
$$
and
$$
b: \nu_M \times I \xrightarrow{b \times Id} \xi \times I \cong  {\Gamma}_{j}^{*}({\Lambda}_{j}^{*} (\xi)) 
$$
together with the following (homotopy) commutative diagram
\begin{center}
\begin{tikzpicture}[node distance=4.0cm, auto]
  \node (A) {};
  \node (B) [right of=A] {};
   \node (C) [right of=B] {$T({{\Gamma}''_{j}}^{*}({{\Lambda}''_{j}}^{*}(\xi)))$}; 
  \node (E) [below of=A] {$S^m \times I$};
  \node (F) [below of=B] {$T(\nu_M)\times I$};
   \node (G) [below of=C] {};    
  \node (X) [below of=E] {};
  \node (Y) [below of=F] {};
   \node (Z) [below of=G] {$T({\Gamma}_{j}^{*}({\Lambda}_{j}^{*}(\xi)))$}; 
  \draw[->, font=\small] (E) to node {$\beta''$} (C);  
  \draw[->, font=\small] (E) to node {$\alpha \times Id$} (F);  
  \draw[->, font=\small] (E) to node {$\beta$} (Z);    
  \draw[->, font=\small] (F) to node {} (C);     
  \draw[->, font=\small] (F) to node {} (Z);   
 \draw[->, font=\small] (C) to node {$T(\overline{q}_j)$} (Z); 
\end{tikzpicture}
\end{center}
Now, 
$$
(N(\mathcal{U}''_{j+1}) \times I \underset{s''_j}{\cup} N (\mathcal{U}''_{j}), 
{{\Gamma}''_{j}}^{*}({{\Lambda}''_{j}}^{*}(\xi)),
\beta'') 
$$
and
$$
(N (\mathcal{U}_{j+1}) \times I \underset{s_j}{\cup} N (\mathcal{U}_{j}), 
{\Gamma}_{j}^{*}({\Lambda}_{j}^{*}(\xi)),
\beta)
$$
determine
$M(F''_j, B''_j)$ and $M(F_j,B_j),$ respectively.
Therefore we can conclude that 
$$
(*) \ \ \ \ \ \ 
q_{j^{*}}:\Omega_{n+2}^{NPD}(N(\mathcal{U}''_{j+1}) \times I \underset{s''_j}{\cup} N (\mathcal{U}''_{j}))
\to
\Omega^{NPD}_{n+2}(N(\mathcal{U}_{j+1}) \times I \underset{s_j}{\cup} N (\mathcal{U}_{j}))
$$
maps
$M(F''_j, B''_j)$ to $M(F_j,B_j)$.

The same holds for
$M(F''^{\circ}_j, B''^{\circ}_j)$ and $M(F^{\circ}_j, B^{\circ}_j),$
if we take
$(f_{0}, b_{0}): M_0\to X$
instead of 
$(f, b): M\to X.$

Since the differences have manifold boundaries, we get
$$
M(F''_j, B''_j) - M(F''^{\circ}_j, B''^{\circ}_j)
\in
\Omega_{n+2}^{NTOP}(N(\mathcal{U}''_{j+1}) \times I \underset{s''_j}{\cup} N (\mathcal{U}''_{j})
\cong
$$
$$
H_{n+2}(N(\mathcal{U}''_{j+1}) \times I \underset{s''_j}{\cup} N (\mathcal{U}''_{j}), \mathbb{\Omega}^{NTOP}),
$$
and similarly,
$$
M(F_j, B_j) - M(F^{\circ}_j, B^{\circ}_j)
\in
\Omega_{n+2}^{NTOP}(N(\mathcal{U}_{j+1}) \times I \underset{s_j}{\cup} N (\mathcal{U}_{j}))
\cong
$$
$$
H_{n+2}(N(\mathcal{U}_{j+1}) \times I \underset{s_j}{\cup} N (\mathcal{U}_{j}), \mathbb{\Omega}^{NTOP})
$$
(cf. Remark~\ref{C:remark2} below). 

The canonical map of spectra
$\mathbb{\Omega}^{NTOP}
\to 
\mathbb{\Omega}^{NPD}
$
maps these elements to
$$
\{f,b\}''_j-\{f_0,b_0\}''_j
\ \
\mbox{and} 
\ \
\{f,b\}_j-\{f_0,b_0\}_j
,$$
respectively.

The property (*) above now implies that
$$
q_{j^{*}}:
H_{n+2}(N(\mathcal{U}''_{j+1}) \times I \underset{s''_j}{\cup} N (\mathcal{U}''_{j}), \mathbb{\Omega}^{NPD})
\to
$$
$$
H_{n+2}(N(\mathcal{U}_{j+1}) \times I \underset{s_j}{\cup} N (\mathcal{U}_{j}), \mathbb{\Omega}^{NPD})
$$
maps
$
\{f,b\}''_j-\{f_0,b_0\}''_j
$
to
$
\{f,b\}_j-\{f_0,b_0\}_j
$. 

This completes the proof of the lemma.
\end{proof}
\begin{remark}\label{C:remark2} Transversality implies that the assembly construction defines an isomorphism between 
$\mathbb{\Omega}^{NTOP}$
homology groups 
and 
$\mathbb{\Omega}^{NTOP}$
bordism groups. This is not true for $\mathbb{\Omega}^{NPD}$. 
\end{remark}
 We now continue with the proof of Theorem~\ref{C:Theorem 5.1}.
 Denote by 
 $$
  { \langle f,b \rangle-\langle f_0,b_0 \rangle}
 \in 
 H^{st}_{n}(\underset{j}{\varprojlim} N(\mathcal U_j), \mathbb{L}^{+})
 $$ 
 the image of 
  $
  \{f,b\}-\{f_0,b_0\}
 $
 under the composition
 $$
  H^{lf}_{n+2} (F_0,\mathbb{\Omega}^{NPD})
  \cong
   H^{st}_{n+1} (CF, \underset{j}{\varprojlim} N(\mathcal U_j) \cup \{*\},\mathbb{L}^{+})
   \xrightarrow{\partial'_{*}}  
   H^{st}_{n}(\underset{j}{\varprojlim} N(\mathcal U_j), \mathbb{L}^{+}),
 $$
 where
 $
 \partial'_{*}
 $
 is the composition of the boundary homomorphism with the projection
 $$
 H^{st}_{n}(\underset{j}{\varprojlim} N(\mathcal U_j), \mathbb{L}^{+})\oplus H^{st}_{n}(\{*\},\mathbb{L}^{+})
   \to  
   H^{st}_{n}(\underset{j}{\varprojlim} N(\mathcal U_j), \mathbb{L}^{+}).
 $$
 
 Lemma~\ref{C:Lemma 5.3}
 now implies that
 $$
 (\restr{q}{\underset{j}{\varprojlim} N(\mathcal U_j)})_{*} 
  { (\langle f,b \rangle''-\langle f_0,b_0 \rangle'')
 =
 \langle f,b \rangle-\langle f_0,b_0\rangle,} 
 $$
 where  
  $\langle f,b \rangle''$
  and
  $\langle f_0,b_0 \rangle''$
  denote the corresponding images of 
  $\{f,b\}''$
  and
 $\{f_0,b_0\}''$, respectively.

In order to complete the proof of Theorem~\ref{C:Theorem 5.1}, we have to pass to 
$$
H^{st}_{n}(X, \mathbb{L}^{+})
$$
via the homomorphism
$$
\psi_{*}: H^{st}_{n}(\underset{j}{\varprojlim} N(\mathcal U_j), \mathbb{L}^{+})
\to
H^{st}_{n}(X, \mathbb{L}^{+}),
$$
induced by the map
$$
\psi: \underset{j}{\varprojlim} N(\mathcal U_j)
\to
X
$$
which was defined in Theorem~\ref{L:maps}.
Similarly for the map
$$
\psi'': \underset{j}{\varprojlim} N(\mathcal U''_j)
\to
X
$$

Now, observe that
$$
\restr{q}{ N(\mathcal U''_j)}=p_j
$$
and that
$
\psi''_j
$
is homotopic to 
$
\psi_j\circ p_j.
$
Hence the following diagram commutes
\begin{center}
	\begin{tikzpicture}[node distance=4cm, auto]
	  \node (LU) {};	   
	  \node (NU') [right of=LU] {$H^{st}_{*}(N(\mathcal{U}''_{j}), \mathbb{L}^{+})$};
	  \node (NU) [below of=NU'] {$H^{st}_{*}(N(\mathcal{U}_{j}), \mathbb{L}^{+})$};
	  \node (RU) [right of=NU'] {};
	  \node (X2) [node distance=2cm, below of=RU] {$H^{st}_{*}(X, \mathbb{L}^{+})$};	    
	  \draw[->, font=\small] (NU') to node {$(\restr{q}{N(\mathcal U''_j)})_{*}$} (NU);
	  \draw[->, font=\small] (NU') to node [midway, above] {$(\psi''_j)_{*}$} (X2);
	  \draw[->, font=\small] (NU) to node [midway, below]{$(\psi_j)_{*}$} (X2);
	\end{tikzpicture}
	\end{center}	 
	
It follows that the diagram below
\begin{center}
	\begin{tikzpicture}[node distance=4cm, auto]
	  \node (LU) {};	   
	  \node (NU') [right of=LU] {$H^{st}_{*}(\underset{j}{\varprojlim} N(\mathcal{U}''_{j}), \mathbb{L}^{+})$};
	  \node (NU) [below of=NU'] {$H^{st}_{*}(\underset{j}{\varprojlim} N(\mathcal{U}_{j}), \mathbb{L}^{+})$};
	  \node (RU) [right of=NU'] {};
	  \node (X2) [node distance=2cm, below of=RU] {$H^{st}_{*}(X, \mathbb{L}^{+})$};	    
	  \draw[->, font=\small] (NU') tonode {$(\restr{q}{N(\mathcal U''_j)})_{*}$} (NU);
	  \draw[->, font=\small] (NU') to node [midway, above] {$(\psi''_j)_{*}$} (X2);
	  \draw[->, font=\small] (NU) to node [midway, below]{$(\psi_j)_{*}$} (X2);
	\end{tikzpicture}
	\end{center}		
also commutes, thus we can see that
$$
\psi''_{*} (\langle f,b \rangle'' - \langle f_0,b_0 \rangle'')
 =
 \psi_{*}(\langle f,b \rangle - \langle f_0,b_0 \rangle).
$$

Analogously, 
one obtains
$$
 q':
 (CF'', F''_0, \underset{j}{\varprojlim} N(\mathcal U''_j))
 \to
 (CF', F'_0, \underset{j}{\varprojlim} N(\mathcal U'_j))
 $$
 such that
$$
\psi''_{*} (\langle f,b \rangle'' - \langle f_0,b_0 \rangle'')
 =
 \psi'_{*}(\langle f,b \rangle' - \langle f_0,b_0 \rangle').
$$

 This proves
 $$
 [f,b]'-[f_0,b_0]'
 =
 [f,b]-[f_0,b_0],
 $$
 and hence finally, completes the proof of Theorem~\ref{C:Theorem 5.1}.
 \end{proof} 
 We shall now apply our construction to the case when
  $X$ is a  manifold with simplicial structure.
 The given 
 degree one normal map
 $(f,b):M\to X$
 then decomposes into adic pieces to define an element
 $$
 \sigma^{c}_{*}(f,b)
 \in
 H_{n}(X, \mathbb{L}^{+}).
 $$
 This element is the controlled surgery obstruction of $(f,b)$
 over $Id:X\to X$ (cf. Pedersen, Quinn, and Ranicki~\cite{PeQuRa03}).
 We take
  $
 (f_0,b_0)=Id:X\to X.
 $ \\
 
\noindent
{\bf Supplement.}
In Section~\ref{SS:3.2}  we associated to given normal degree one maps 
 $$
 (f_0,b_0):M_0\to X
 \ \ \ 
 \mbox{ and}
 \ \ \  
 (f,b):M\to X,
 $$
  where $X$ is a generalized manifold, the element
 $$
  [f,b]  -  [f_0,b_0]  \in H^{st}_{n}(X,\mathbb{L}^{+}).
 $$
 The normal maps $(f_0,b_0)$ and $(f,b)$
 give rise to a normal space with boundary 
 $$M_0 \times (0,\infty)
 \ \ 
 \mbox{ and}
 \ \ 
 M \times (0,\infty).$$
 
 At this point, transversality for normal spaces (with TOP-manifold boundaries) is used to split (disassemble) the normal space,
 in order to obtain an element in the
 $\mathbb{\Omega}^{NPD}$-homology group. 
 
Actually, it belongs to the 
 $\mathbb{\Omega}^{NTOP}$-homology, 
 but we pass to 
 $\mathbb{\Omega}^{NPD}$ via $$\mathbb{\Omega}^{NTOP}\to\mathbb{\Omega}^{NPD}.$$
 A detailed splitting construction can be found in K\" uhl, Macko, and Mole~\cite[Construction 11.3, p.236]{KMM13}.
 
 If $X$ is a manifold with simplicial structure, transversality directly applies to split $(f,b)$
 and $(f_0,b_0)$
 into pieces in order
 to obtain an element
 in $H_n(X, \mathbb{L}).$
 It is now natural to take 
 $$(f_0,b_0)=Id:X\to X.$$
 
 Since 
 $Id:X\to X$ does not contribute to $\mathbb{L}$-homology, one gets an element depending on 
 $(f,b)$ which we shall denote by 
 $$
 \sigma(f,b)\in H_n(X,\mathbb{L})
 $$ 
 (this corresponds to 
 sig$_{X}^{\mathbb{L}}(f,b)$ in K\" uhl, Macko, and Mole~\cite[Definition 8.14]{KMM13}). 
 Moreover,  
 $$
 \sigma(f,b)\in H_n(X, \mathbb{L}^{+}).
 $$
 
 Since $ H_n(X,\mathbb{L})$
is the controlled surgery obstruction group,
the element
 $$\sigma(f,b)\in
 H_n(X,\mathbb{L}^{+}) \subset H_n(X,\mathbb{L})
 $$
 is
 sometimes denoted by 
$\sigma^{c}(f,b)$.

 The reason is that the 0-dimensional components come from
 $$
 f^{-1}(D(\sigma, X)) \to D(\sigma, X),
 $$
 where $\sigma \prec X$ runs through the $n$-simplices of $X$
  and $D(\sigma, X)$ is its dual with respect to a subdivision $X'$ of $X$. 
 Hence
  $D(\sigma, X)$ is a point $x\in X$ and by transversality,
$$f^{-1}(D(\sigma, X))=\{\pm y_1,...,\pm y_k\}\subset M.$$
 
 Since $f$ has degree one, it is equivalent to $y\to x$, which is the tri\-vial object.
 (We have also addressed such questions in Hegenbarth and Repov\v{s}~\cite[Lemma 2.1]{HeRe19}.)
 
 If $X$ is only a generalized manifold, this leads to the so-called 0-dimensional signature of $f$.
 This is misleading, since it is the signature obstruction of a $4k$-dimensional surgery problem, which is ``moved'' to $\pi_0(\mathbb{L})=L_0$ by periodicity of $\mathbb{L}$ (cf. Hegenbarth and Repov\v{s}~\cite[p. 79]{HeRe19}).\\ 

The aim of the next theorem is to show that for a given degree one normal map
$(f,b):M\to X$, 
where $X$ is a
 manifold
 with simplicial structure, the construction via normal spaces from Section~\ref{SS:3.2}, gives an element which coincides with the element
$\sigma(f,b)$. 
 \begin{theorem}\label{C:Proposition 5.4}
 The controlled surgery obstruction of
 $
 (f,b):M\to X
 $
 coincides with
 $
 [f,b]-[f_0,b_0].
 $
 \end{theorem}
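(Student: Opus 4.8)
The plan is to run the construction of Section~\ref{SS:3.2} with a sequence of coverings adapted to the given simplicial structure of $X$, and then to identify the resulting adic pieces, level by level, with those of the classical disassembly of $(f,b)$ over the dual cells of $X$.

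First I would take $\mathcal{U}_j$ to be the covering of $X$ by the open stars of the vertices of the $j$-th iterated barycentric subdivision $X^{(j)}$. Since for a simplicial complex the canonical map to the nerve of its star covering is a simplicial homeomorphism, one has $N(\mathcal{U}_j)\cong X^{(j)}$; after this identification the maps $\varphi_j\colon X\to N(\mathcal{U}_j)$ and $\psi_j\colon N(\mathcal{U}_j)\to X$ may be chosen to be the identity of the underlying space, the simplicial maps $s_j\colon N(\mathcal{U}_{j+1})\to N(\mathcal{U}_j)$ become the standard subdivision maps, the refinement condition ($\mathcal{U}_{j+1}$ refines $\mathcal{U}_j$) holds, and $\mathrm{mesh}(\mathcal{U}_j)\to 0$. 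Then $F_0$ is the infinite mapping telescope of the tower $\cdots\to X^{(2)}\to X^{(1)}$, and $\varprojlim_j N(\mathcal{U}_j)$ is again identified with $X$, with $\psi\colon\varprojlim_j N(\mathcal{U}_j)\to X$ of Theorem~\ref{L:maps} the identity. In particular $\psi_*\colon H^{st}_n(\varprojlim_j N(\mathcal{U}_j),\mathbb{L}^+)\to H^{st}_n(X,\mathbb{L}^+)$ is an isomorphism, so by Theorem~\ref{C:Theorem 5.1} it suffices to compute the image of $\{f,b\}-\{f_0,b_0\}$ at the inverse limit level.

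Next I would analyse the normal space $M(F,B)$ of Section~\ref{SS:3.2} in this situation. Because each $N(\mathcal{U}_j)$ is now a \emph{manifold} (a subdivision of $X$), normal transversality produces $TOP$-manifold adic pieces, so the disassembled element actually lies in $\mathbb{\Omega}^{NTOP}$-homology, and by Remark~\ref{C:remark2} it is completely determined by its components over the simplices of $F_0$. Breaking $\{f,b\}-\{f_0,b_0\}$ into the level pieces over $N(\mathcal{U}_{j+1})\times I\underset{s_j}{\cup}N(\mathcal{U}_j)$ exactly as in the proof of Lemma~\ref{C:Lemma 5.3}, and passing to the inverse limit, the surviving contribution over the $n$-simplices of $\varprojlim_j N(\mathcal{U}_j)\cong X$ is the adic family
$$
\bigl(f^{-1}(D(\sigma,X))\to D(\sigma,X)\bigr),\qquad \sigma\prec X,
$$
together with its bundle data, that is, precisely the family that defines the controlled surgery obstruction $\sigma(f,b)$ in the sense of Pedersen, Quinn, and Ranicki~\cite{PeQuRa03} and K\"uhl, Macko, and Mole~\cite[Construction~11.3]{KMM13}. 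The composite of spectra $\mathbb{\Omega}^{NTOP}\to\mathbb{\Omega}^{NPD}\to\Sigma\mathbb{L}^+$ of Theorem~\ref{F:spectra} carries these manifold adic pieces to the corresponding $\mathbb{L}^+$-surgery obstructions, so applying the isomorphisms of Theorems~\ref{F:metricPair} and~\ref{F:spectra} together with the Steenrod boundary map identifies the image of $\{f,b\}-\{f_0,b_0\}$ at the inverse limit level with $\sigma(f,b)-\sigma(f_0,b_0)$. Finally, with $(f_0,b_0)=Id\colon X\to X$ each piece of the $(f_0,b_0)$-family has the form $y\to x$ with $x=D(\sigma,X)$ a point, hence is the trivial object and contributes $0$ (as recorded in the Supplement and in Hegenbarth and Repov\v{s}~\cite[Lemma~2.1]{HeRe19}), so $\sigma(f_0,b_0)=0$ and therefore $\psi_*(\langle f,b\rangle-\langle f_0,b_0\rangle)=\sigma(f,b)=[f,b]-[f_0,b_0]$.

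The step I expect to be the main obstacle is the bookkeeping just indicated: one must verify that the connecting mapping cylinders $N(\mathcal{U}_{j+1})\times I\underset{s_j}{\cup}N(\mathcal{U}_j)$ of the telescope $F_0$ contribute nothing after passing to $\varprojlim_j N(\mathcal{U}_j)$ and pushing forward by $\psi$, so that only the deepest-level dual-cell decomposition of $X$ survives. This is a compatibility statement relating the locally finite homology of the telescope, the Steenrod boundary homomorphism, and the subdivision maps $s_j$: it should follow from the fact that each $s_j$ is a subdivision, so that the map it induces on adic surgery obstructions is the canonical refinement and is compatible with assembly, together with the identification $\psi=Id$ afforded by the chosen coverings. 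Granting this, the remainder is a routine matching of definitions with the adic construction of~\cite[Construction~11.3]{KMM13}.
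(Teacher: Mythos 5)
Your overall architecture is plausible, but the proposal has a genuine gap exactly at the step you flag as ``the main obstacle'': the identification of the boundary image of $\{f,b\}-\{f_0,b_0\}$ with $\sigma^{c}(f,b)$ is the entire content of the theorem, and you only assert that it ``should follow'' from compatibility of the subdivision maps $s_j$ with assembly. Nothing in your sketch actually controls how the chain
$H^{lf}_{n+2}(F_0,\mathbb{\Omega}^{NPD})\cong H^{st}_{n+1}(CF,\varprojlim_j N(\mathcal{U}_j)\cup\{*\},\mathbb{L}^{+})\xrightarrow{\partial'_{*}}H^{st}_{n}(\varprojlim_j N(\mathcal{U}_j),\mathbb{L}^{+})$
interacts with the level pieces of the telescope, and your claim that ``only the deepest-level dual-cell decomposition survives'' is precisely what needs proof. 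There is also a secondary weakness: you lean on the identifications $\varprojlim_j N(\mathcal{U}_j)\cong X$ and $\psi=Id$. The simplicial maps $s_j$ are simplicial approximations to the identity, not the identity, so the telescope is not literally $X\times[1,\infty)$ and the inverse limit is not obviously $X$; the paper explicitly remarks (Section~6, (II)) that the relation between $\varprojlim_j N(\mathcal{U}_j)$ and $X$ is delicate and that its argument deliberately avoids needing $\psi_*$ to be an isomorphism.

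The paper closes the gap you left open by a naturality argument rather than a level-by-level computation. It first forms the element $\overline{\{f,b\}}-\overline{\{f_0,b_0\}}\in H^{lf}_{n+2}(X\times(0,\infty),\mathbb{\Omega}^{NPD})$ directly on the cylinder, where the classical dual-cell splitting of $(f\times Id,b\times Id)$ identifies its image under the analogous composition ending in $H^{st}_{n}(X,\mathbb{L}^{+})$ with $\sigma^{c}_{*}(f,b)$ (the $(f_0,b_0)=Id$ term contributing nothing). By construction $\Gamma_{*}$ carries this class to $\{f,b\}-\{f_0,b_0\}$ on $F_0$, and $\Gamma$ extends to a map of compactified pairs $X\times[0,\infty]/(X\times\{\infty\})\to CF$ restricting to $\varphi$ on the limit sets; naturality of the two four-term sequences under $\Gamma$ then gives $\varphi_{*}(\sigma^{c}_{*}(f,b))=\langle f,b\rangle-\langle f_0,b_0\rangle$, and $\psi\circ\varphi\sim Id$ finishes the proof. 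If you want to salvage your approach, the cleanest repair is essentially to rediscover this: with your barycentric star coverings, $\Gamma\colon X\times\mathbb{R}_{+}\to F_0$ is a controlled proper homotopy equivalence, and comparing with $X\times(0,\infty)$ via $\Gamma$ replaces the unproved telescope bookkeeping by the known statement about splitting over $X\times(0,\infty)$.
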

 \begin{proof}
 Choose a sequence
 $\{ \mathcal{U}_j\}$
 of coverings 
 of $X$
 as above.
 Since $X$ is a
  manifold
  with simplicial structure,
 we can define
 $$
 \overline{\{f,b\}}
 -
 \overline{\{f_0,b_0\}}
 \in
 H^{lf}_{n+2} (X\times (0,\infty),\mathbb{\Omega}^{NPD}).
 $$
 Here, 
 $\overline{\{f,b\}}$
 denotes the normal space, defined by the mapping cylinder of the map
 $$
 (f\times Id, b\times Id): M\times (0,\infty)\to X\times (0,\infty),
 $$
 and similarly, for 
 $\overline{\{f_0,b_0\}}.$
 \\
 Now,
 $ $$
 \overline{\{f,b\}}
 -
 \overline{\{f_0,b_0\}}$
  maps under
  the induced map
  $$
 \Gamma: X\times (0,\infty) \to F_0
 $$
 to
  $$
 {\{f,b\}}
 -
 {\{f_0,b_0\}}
 \in
 H^{lf}_{n+2} (F_0,\mathbb{\Omega}^{NPD}).
 $$
 Under the composition
 $$
  H^{lf}_{n+2} (X\times (0,\infty),\mathbb{\Omega}^{NPD})
  \cong
   H^{lf}_{n+1} (X\times (0,\infty),\mathbb{L}^{+})
  \cong  
  $$    
   $$   
    H^{st}_{n+1} (X\times [0,\infty]/(X\times \{\infty \}), X\cup \{*\}, \mathbb{L}^{+})
  \xrightarrow{\partial'_{*}} 
H^{st}_{n} (X, \mathbb{L}^{+}),
 $$ 
$
 \overline{\{f,b\}}
 -
 \overline{\{f_0,b_0\}}
 $
 maps to
 $
 \sigma^{c}_{*}(f,b).
 $
 This is because 
 $
 \overline{\{f,b\}}
 -
 \overline{\{f_0,b_0\}}
 $
 is represented by the mapping cylinders of
 $$
 (f\times Id, b\times Id)
 \ \ 
 \mbox{and}
 \ \ 
 (f_0\times Id, b_0\times Id).
 $$ 
 
 The latter one does not contribute to 
 $\mathbb{L}$-homology
 because we have chosen
 $(f_0,b_0)=(Id,Id)$.
 Under the composition it therefore goes to the element defined by splitting
  $(f,b):M\to X$, i.e. to $\sigma(f,b)$.
 
 Here,
$$
 X\times [0,\infty]/(X\times \{\infty \})
 $$
 is the completion of 
 $
 X\times (0,\infty)
 $ 
 obtained as the inverse limit, similarly as
 $$
 CF=\underset{l}{\varprojlim} F_l
 $$
 (cf. Chapter 3).
 However, under 
  $$
  \Gamma_{*}:
  H^{lf}_{n+2} (X\times (0,\infty),\mathbb{\Omega}^{NPD})
  \to
  H^{lf}_{n+2} (F_0,\mathbb{\Omega}^{NPD})
 $$ 
 the difference
 $
  \overline{\{f,b\}}
 -
 \overline{\{f_0,b_0\}}
 $
 maps to 
  $
  \{f,b\}
 -
 \{f_0,b_0\}.
 $
 
 Consider now (using previous notations)
 $$
 X\times I 
  \xrightarrow{\Gamma_j} 
  N(\mathcal{U}_{j+1})\times I
 \underset{s_j}{\cup}
   N(\mathcal{U}_{j})
  \xrightarrow{\overline{\Lambda}_j}
  X\times I,
 $$
 where
 $$
 {\overline{\Lambda}_j}(u,t)=(\Lambda_j (u),t)
 $$
 so 
 $$
 {\overline{\Lambda}_j} \circ
 \Gamma_j
 (x,0)=
 ((\psi_{j+1}\circ \varphi_{j+1})(x),0)
 $$
 and
 $$
  {\overline{\Lambda}_j} \circ
 \Gamma_j
 (x,1)=
 ((\psi_{j}\circ \varphi_{j})(x),1).
 $$
 
 Since
 $
 \psi_{k}\circ \varphi_{k} \sim Id_X
 $
 we can use these homotopies to glue the maps and obtain 
 $$
 X\times \mathbb{R}_{+} 
  \xrightarrow{\Gamma} 
  F_0
  \xrightarrow{\overline{\Lambda}}
  X\times \mathbb{R}_{+}, 
 $$ 
 restricting to
 $$
 X\times \{0\} 
  \xrightarrow{\varphi} 
   \underset{j}{\varprojlim} N(\mathcal U_j)
  \xrightarrow{\psi}
  X\times \{0\}, 
 $$
 i.e. we get maps
 $$
 X\times [0,\infty]/(X\times \{\infty \})
 \to
 CF
 \to
  X\times [0,\infty]/(X\times \{\infty \}).
 $$
 
 Therefore $\Gamma$
 induces a morphism
 between the sequences  
   $$
  H^{lf}_{n+2} (X\times [0,\infty),\mathbb{\Omega}^{NPD})
  \cong
  H^{lf}_{n+1} (X\times [0,\infty),\mathbb{L}^{+})  \cong
  $$ 
  $$ 
  H^{st}_{n+1} (X\times [0,\infty]/(X\times \{\infty \}), X\cup \{*\},\mathbb{L}^{+})  
  \xrightarrow{\partial'_{*}}
  H^{st}_{n} (X,\mathbb{L}^{+}),
 $$
  and
   $$
  H^{lf}_{n+2} (F_0,\mathbb{\Omega}^{NPD})
  \cong
  H^{lf}_{n+1} (F_0,\mathbb{L}^{+})  
  \cong 
  $$
  $$
  H^{st}_{n+1} (CF, \underset{j}{\varprojlim} N(\mathcal U_j) \cup \{*\},\mathbb{L}^{+})  
  \xrightarrow{\partial'_{*}}
  H^{st}_{n} (\underset{j}{\varprojlim} N(\mathcal U_j),\mathbb{L}^{+}).
 $$
 
 It follows that  
 $$
 \varphi_{*}(\sigma^{c}_{*}(f,b))= \langle f,b \rangle - \langle f_0,b_0 \rangle
 \in
 H^{st}_{n} (\underset{j}{\varprojlim} N(\mathcal U_j),\mathbb{L}^{+}).
 $$ 
 Since
 $
 \psi \circ \varphi \sim Id,
 $
 we can conclude that 
 $$
 [f,b]-[f_0,b_0]=\psi_{*}(\langle f,b \rangle - \langle f_0,b_0 \rangle)=
  \psi_{*}(\varphi_{*}( \sigma^{c}_{*}(f,b)))= \sigma^{c}_{*}(f,b).
 $$  
 \end{proof}
 We shall finish this section
 with some remarks on the map $t$. 
 In the PL manifold case there is an
${\mathbb{L}}^{\bullet}$-orientation
$$
\mathcal{U}_{{\mathbb{L}}^{\bullet}}\in
\overline{H}^{m-n}(T(\nu_{X}), {\mathbb{L}}^{\bullet}),
$$
where
${\mathbb{L}}^{\bullet}$
is the symmetric
$\mathbb{L}$-spectrum.
Furthermore,
${\mathbb{L}}^{\bullet}$
is a ring spectrum and 
$\mathbb{L}^{+}$
is an 
${\mathbb{L}}^{\bullet}$-module spectrum, and the cup product
$$
. \cup \  \mathcal{U}_{{\mathbb{L}}^{\bullet}}:
[X,G/TOP]=H^{\circ}(X,\mathbb{L}^{+})
\to
\overline{H}^{m-n}(T(\nu_{X}), \mathbb{L}^{+})
$$
is an isomorphism. 
Here we are assuming that 
$X\subset \mathbb{R}^m$.

The difference between 
$(f,b):M\to X$
and
$(f_0,b_0):X\to X$
defines a map
$$\mathcal{N}(X)\to [X,G/TOP].$$
Combining with the Alexander-Spanier duality
$$
\overline{H}^{m-n}(T(\nu_{X}), \mathbb{L}^{+})
\cong
H_{n}(X, \mathbb{L}^{+}),
$$
we obtain a bijective map
$$\mathcal{N}(X)\to 
H_{n}(X, \mathbb{L}^{+}).$$
This is the map $t$ 
 (cf. Ranicki~\cite[Chapter 17, pp.191-193]{Ranicki92}).
 
 In the case of a generalized manifold we can embed $X$  
into $\mathbb{R}^m$ with a cylindrical neighborhood, also obtaining an isomorphism 
$$
\overline{H}^{m-n}(T(\nu_{X}), \mathbb{L}^{+})
\cong
H^{st}_{n}(X, \mathbb{L}^{+}).
$$ 
Let
$$
N=\partial N \times I \underset{p}{\cup} X$$
be a mapping cylinder neighborhood of
$X\subset S^{m+1}$. It can be used to prove the following fact. 
\begin{theorem}\label{C:Proposition 5.5.}
There exists an
${\mathbb{L}}^{\bullet}$-orientation 
$$
\mathcal{U} \in
H^{m+1-n}(N, \partial N,{\mathbb{L}}^{\bullet}).
$$
and an isomorphism
$$
.\cup \mathcal{U}:
H^{0}(X,\mathbb{L}^{+}) 
\xrightarrow{\cong}
H^{m+1-n}(N, \partial N, \mathbb{L}^{+}).
$$
\end{theorem}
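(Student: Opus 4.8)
The plan is to mimic the classical PL-manifold construction of the $\mathbb{L}^\bullet$-orientation of a Poincar\'e dual Thom space, but to replace the Thom space $T(\nu_X)$ of a genuine normal bundle by the mapping cylinder pair $(N,\partial N)$ of the mapping cylinder neighborhood $N=\partial N\times I\cup_p X$ of $X\subset S^{m+1}$. The point is that $(N,\partial N)$ is an honest compact polyhedral pair (or at least an ENR pair with $\partial N\hookrightarrow N$ cofibration), so that $H^*(N,\partial N;-)$ is ordinary (not Steenrod) cohomology and the algebraic-surgery machinery of Ranicki~\cite[Chapter 17]{Ranicki92} applies directly on the $N$-side, while the $X$-side is controlled via $H^{st}_*(X,-)$ and Alexander--Spanier/Steenrod duality as recorded just before the statement.

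First I would set up the duality. Since $N$ is a mapping cylinder neighborhood of $X$ in $S^{m+1}$ with frontier $\partial N$, the pair $(N,\partial N)$ deformation retracts (rel nothing) onto $X$, and $N/\partial N$ is, up to homotopy, the ``Thom space'' of the normal spherical fibration: $\partial N\to X$ is a spherical fibration of fiber dimension $m-n$ (this is the Spivak normal fibration $\nu_X$ realized geometrically by $p$), and $N/\partial N\simeq T(\nu_X)$. Hence $H^{m+1-n}(N,\partial N;\mathbb{L}^\bullet)\cong \overline H^{m+1-n}(N/\partial N;\mathbb{L}^\bullet)\cong \overline H^{m-n}(T(\nu_X);\mathbb{L}^\bullet)$, and Alexander duality in $S^{m+1}$ gives $\overline H^{m+1-n}(N/\partial N;\mathbb{L}^\bullet)\cong H^{st}_{n}(X;\mathbb{L}^\bullet)$ (the Steenrod version of Alexander--Spanier duality, using that $X$ is a compact ENR and $\mathbb L^\bullet$ is a spectrum; cf. Theorem~\ref{F:metricPair}, identifying $H^{st}_*(X)$ with $H^{lf}_*$ of the open complement). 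Then I would produce the class $\mathcal U$ as the Steenrod--Alexander dual of the fundamental class $[X]\in H_n^{st}(X;\mathbb Z)\to H_n^{st}(X;\mathbb{L}^\bullet)$ pushed into $\mathbb{L}^\bullet$-homology via the unit $\mathbb{H}(\mathbb Z)\to \mathbb{L}^\bullet$ of the ring spectrum $\mathbb{L}^\bullet$; concretely $\mathcal U$ is the image of $[X]$ under the composite of the unit map and the duality isomorphism. One must check that this $\mathcal U$ is genuinely an $\mathbb{L}^\bullet$-orientation in the sense that cap product with it (equivalently, the slant/Poincar\'e duality map it induces) is an isomorphism --- this is where the generalized-manifold hypothesis (that $X$ is a $\mathbb Z$-homology $n$-manifold) enters, because locally, around each $x\in X$, the relevant cap product reduces to the $\mathbb Z$-homological local structure $H_*(\mathbb R^n,\mathbb R^n\setminus 0)$ tensored up to $\mathbb{L}^\bullet$, exactly as in the PL case.

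Next I would establish the cup-product isomorphism $.\cup\mathcal U: H^0(X;\mathbb L^+)\xrightarrow{\cong} H^{m+1-n}(N,\partial N;\mathbb L^+)$. Using that $\mathbb L^\bullet$ is a ring spectrum and $\mathbb L^+$ an $\mathbb L^\bullet$-module spectrum, cup product with $\mathcal U\in H^{m+1-n}(N,\partial N;\mathbb L^\bullet)$ gives a map $H^0(X;\mathbb L^+)\cong H^0(N;\mathbb L^+)\to H^{m+1-n}(N,\partial N;\mathbb L^+)$ (the first isomorphism because $N$ deformation retracts to $X$). To see it is an isomorphism I would invoke the same mechanism as in Ranicki~\cite[Chapter 17, pp.~191--193]{Ranicki92}: because $\mathcal U$ restricts on each fiber $(D^{m-n},S^{m-n-1})$ of $\partial N\to X$ to an $\mathbb L^\bullet$-generator (equivalently, its image under $\mathbb L^\bullet\to\mathbb H(\mathbb Z)$ is the integral Thom class up to the local orientation), the Thom isomorphism $\overline H^*(T(\nu_X);\mathbb L^+)\cong H^{*-(m-n)}(X;\mathbb L^+)$ holds with $\mathbb L^+$-coefficients, and then $.\cup\mathcal U$ is precisely the inverse Thom isomorphism followed by the identification $H^{m+1-n}(N,\partial N;\mathbb L^+)\cong\overline H^{m+1-n}(N/\partial N;\mathbb L^+)\cong\overline H^{m-n}(T(\nu_X);\mathbb L^+)$. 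I would spell this out by a Mayer--Vietoris/cellular induction over a handle (or simplicial) decomposition of $N$ relative to $\partial N$ --- here the mapping cylinder neighborhood is crucial, since it provides exactly such a finite CW structure on $(N,\partial N)$ even though $X$ itself need not be triangulable.

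The main obstacle, I expect, is not the formal duality but verifying that the locally-defined class $\mathcal U$ really is an $\mathbb L^\bullet$-orientation in the strong (fiberwise-generator) sense for a generalized manifold, since one does not have an honest vector bundle $\nu_X$ but only the spherical fibration $\partial N\to X$ together with its canonical topological reduction $\xi_0$ (Lemma~\ref{C:Lemma 4.3}); I would handle this by using the canonical reduction to pull back the universal $\mathbb L^\bullet$-orientation of $MSTOP$ (or rather of the universal $TOP$ bundle, via the $\mathbb L^\bullet$-orientation of topological manifolds/bundles of Ranicki) along the classifying map of $\xi_0$, and then identifying the resulting class with the Alexander-dual of $[X]$ by comparing their images under $\mathbb L^\bullet\to\mathbb H(\mathbb Z)$, which agree because both restrict to the integral Thom class. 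Everything else --- the deformation retraction $N\simeq X$, the homeomorphism $N/\partial N\simeq T(\nu_X)$ up to homotopy, the Steenrod Alexander duality in $S^{m+1}$ --- is standard for ENR's and mapping cylinder neighborhoods, so I would state those as lemmas and refer to Ferry~\cite{Ferry95} and Ferry--Pedersen~\cite{Fe-Ped}.
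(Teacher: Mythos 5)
First, a caveat about the comparison you asked for: the paper does not actually prove Theorem~\ref{C:Proposition 5.5.} --- the authors explicitly omit the proof and defer it to a future paper --- so there is no argument of theirs to measure yours against. Judged on its own, the second half of your outline is the right mechanism and would establish the statement as written: transport the canonical $\mathbb{L}^{\bullet}$-orientation of topological bundles along the classifying map of the canonical reduction $\xi_0$ of Lemma~\ref{C:Lemma 4.3} (Ferry and Pedersen~\cite{Fe-Ped}), identify $N/\partial N$ with the Thom space of the Spivak fibration realized by $p:\partial N\to X$, and deduce the cup-product isomorphism as a Thom isomorphism with $\mathbb{L}^{+}$-module coefficients by Mayer--Vietoris induction over a finite complex homotopy equivalent to $X$ (cf. Ranicki~\cite{Ranicki92}).

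Your primary construction of $\mathcal{U}$, however, contains a genuine error. You propose to define $\mathcal{U}$ as the Steenrod--Alexander dual of the fundamental class $[X]$ ``pushed into $\mathbb{L}^{\bullet}$-homology via the unit $\mathbb{K}(\mathbb{Z},0)\to\mathbb{L}^{\bullet}$''. No such map of ring spectra exists: the unit of $\mathbb{L}^{\bullet}$ is $\mathbb{S}\to\mathbb{L}^{\bullet}$, and the natural comparison with the Eilenberg--MacLane spectrum goes the other way, $\mathbb{L}^{\bullet}\to\mathbb{K}(\mathbb{Z},0)$. If an integral fundamental class lifted canonically to an $\mathbb{L}^{\bullet}$-orientation, every Poincar\'e duality space would be $\mathbb{L}^{\bullet}$-orientable and its Spivak fibration would always reduce topologically, which is false; the reduction is extra data, supplied here precisely by \cite{Fe-Ped}. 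For the same reason, your later identification of the pulled-back class with the Alexander dual of $[X]$ ``by comparing images under $\mathbb{L}^{\bullet}\to\mathbb{K}(\mathbb{Z},0)$'' is unjustified, since that map is far from injective on cohomology; fortunately the theorem does not require this identification. Two smaller points: the step $\overline{H}^{m+1-n}(N/\partial N,\mathbb{L}^{\bullet})\cong\overline{H}^{m-n}(T(\nu_X),\mathbb{L}^{\bullet})$ silently switches between the embedding $X\subset S^{m+1}$ of the theorem and the embedding $X\subset\mathbb{R}^{m}$ of the preceding paragraph, and the suspension implementing this shift should be made explicit; and your remark that the isomorphism ``reduces locally to $H_*(\mathbb{R}^n,\mathbb{R}^n\setminus 0)$ tensored up to $\mathbb{L}^{\bullet}$'' points at the wrong statement --- what is needed is a Thom isomorphism for the reduced normal fibration over $X$, not Poincar\'e duality for $X$, so the $\mathbb{Z}$-homology-manifold hypothesis enters only through the existence of $\xi_0$ and of the mapping cylinder neighborhood.
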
 
With this theorem one obtains the following isomorphisms:
$$
H^{0}(X,\mathbb{L}^{+}) 
\cong
H^{m+1-n}(N, \partial N, \mathbb{L}^{+})
\cong
H^{m+1-n}(S^{m+1}, S^{m+1}\setminus N, \mathbb{L}^{+})
\cong
$$
$$
H^{m+1-n}(S^{m+1}, S^{m+1}\setminus X, \mathbb{L}^{+})
\cong
$$
$$
\overline{H}^{m-n}(S^{m+1}\setminus X, \mathbb{L}^{+})
\cong
H^{st}_{n}(X, \mathbb{L}^{+}).
$$
The last isomorphism is the Steenrod duality 
(cf. 
Kahn, Kaminker and Schochet~\cite[Theorem B]{KaKaSc77},
there one must take the reduced 
$\mathbb{L}^{+}$-homology).

We shall omit the proof of Theorem~\ref{C:Proposition 5.5.} because it is not obvious that the composition
$$
\mathcal{N}(X)
\to
H^{0}(X,\mathbb{L}^{+}) 
\to
H^{st}_{n}(X, \mathbb{L}^{+})
$$
coincides with the association
$$(f,b)\to [f,b]-[f_0,b_0].$$
This will be included in our future paper. 
\section{Discussion}\label{SS:remarks}
\paragraph{{\bf  (I)}}\label{SS:3.3a}
The homotopy groups of the spectrum $\mathbb{L}^+$ are the Wall groups of the trivial group, i.e. 
$$\pi_n (\mathbb{L}^+) \cong L_n (1) \cong L_n \ \ {\hbox{\rm{ for every}}}  \ \ n\geq 1.$$
 Since the simplicial complex 
 $$N (\mathcal{U}_{1}) \times I \underset{s_0}{\cup} N (\mathcal{U}_{0})$$ is contractible,
  we have 
$$H^{st}_{n} (N (\mathcal{U}_{1}) \times I \underset{s}{\cup} N (\mathcal{U}_{0}), \mathbb{L}^+) \cong H_n (N (\mathcal{U}_{1}) \times I \underset{s_0}{\cup} N (\mathcal{U}_{0}), \mathbb{L}^+) \cong $$
$$\cong H_n (\{b_0\}, \mathbb{L}^+) \cong L_n.$$

Therefore the above mentioned homology boundary homomorphism  is 
 $$
 H^{st}_{n+1} (CF, \underset{j}{\varprojlim} N (\mathcal{U}_{j})  {\amalg} \{b_0\}, \mathbb{L}^+) \rightarrow H^{st}_{n} ( \underset{j}{\varprojlim} N (\mathcal{U}_{j}), \mathbb{L}^+) \oplus L_n.
 $$

The component in $L_n$ is the surgery obstruction of 
$$(f,b) = (M^n \rightarrow X^n, \ \ \nu_{M^n} \rightarrow \xi)$$
mapped to $L_n$ under 
 $$L_n (\pi_1 (X^n)) \rightarrow L_n (1) \cong L_n,$$
 where the morphism is induced
 by
  $X^n \rightarrow \{*\}$.\\
  
\paragraph{{\bf  (II)}}\label{SS:3.3b}
We have used the map 
$$\psi_* : H^{st}_{n} ( \underset{j}{\varprojlim} N (\mathcal{U}_{j}), \mathbb{L}^+) \rightarrow H^{st}_{n} ( X^n, \mathbb{L}^+)$$
 to obtain our element 
 $$\left[ f,b\right] \in H^{st}_{n} ( X^n, \mathbb{L}^+).$$
  We did not need that it is an isomorphism. 
 
 In fact, the relation between $\underset{j}{\varprojlim} N (\mathcal{U}_{j})$ and $X^n$ seems to be insufficiently documented. It was claimed in 
 Milnor~\cite[Lemma 2]{Milnor95} that they are identical. It was
 also
  asserted
 in Ferry~\cite[Footnote, p. 156]{Ferry95} that they are strongly shape equivalent. 
 
 To this end,
  we state the following theorem.
  First, recall that given $\varepsilon >0$,
  a map 
  $f:X\to Y$ of metric spaces $X$ and $Y$ is called
   an $\varepsilon$-map if for every $y \in Y$, $\hbox{\rm{diam}}( f^{-1}(y)) < \varepsilon$.
\begin{theorem}\label{6.1}
The maps 
$$\varphi_j : X^n \rightarrow N (\mathcal{U}_{j+1}) \times I \underset{s_j}{\cup} N (\mathcal{U}_{j})$$
 fit together to produce
 the map $$\varphi: X^n \rightarrow \underset{j}{\varprojlim} N (\mathcal{U}_{j}),$$ which is an $\varepsilon$-map onto the image of $\varphi$ for all $\varepsilon > 0$.
\end{theorem}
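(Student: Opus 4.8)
The plan is to first assemble the map $\varphi$ out of the pieces $\varphi_j$, then show that $\varphi$ is injective, and finally deduce the $\varepsilon$-map property from compactness of $X^n$.

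\emph{Construction of $\varphi$.} By Corollary~\ref{C:mapsLambda} the maps $\Gamma_j$ of Theorem~\ref{L:mapGamma} glue to a map $\Gamma\colon X^n\times\mathbb R_+\to F_0$ whose restriction to $X^n\times\{j\}$ is $\varphi_j$, regarded as a map into $N(\mathcal U_j)\subset F_0$; for fixed $x$ the path $t\mapsto\Gamma(x,t)$ runs successively through the mapping cylinders $N(\mathcal U_{j+1})\times I\underset{s_j}{\cup}N(\mathcal U_j)$. Inside $CF=\varprojlim_l F_l$ this path converges, as $t\to\infty$, to a point of $CF\setminus F=\varprojlim_j N(\mathcal U_j)$, and I define $\varphi(x)$ to be that limit. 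Continuity of $\varphi$, and of each coordinate map $q_j\circ\varphi\colon X^n\to N(\mathcal U_j)$ with $q_j$ the projection of the inverse limit onto $N(\mathcal U_j)$, follows from continuity of $\Gamma$ by the usual inverse-limit argument. Moreover $q_j\circ\varphi$ is again a canonical map subordinate to $\mathcal U_j$ in the sense of Section~\ref{SS:map}: it agrees with $\varphi_j$ up to a homotopy that stays inside open stars (exactly the control used in Section~\ref{S:Coverings}), and hence, if $\langle U^j_{k_0},\dots,U^j_{k_p}\rangle$ denotes the carrier simplex of $q_j\varphi(x)$ in $N(\mathcal U_j)$, then $x\in\bigcap_{i=0}^{p}U^j_{k_i}$.

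\emph{Injectivity and the $\varepsilon$-map property.} Suppose $\varphi(x)=\varphi(x')$. Applying $q_j$ gives $q_j\varphi(x)=q_j\varphi(x')$ for every $j$, so these two points have the same carrier simplex $\langle U^j_{k_0},\dots,U^j_{k_p}\rangle$ in $N(\mathcal U_j)$; by the previous paragraph both $x$ and $x'$ then lie in $\bigcap_{i=0}^{p}U^j_{k_i}$, a subset of $X^n$ of diameter at most $\mathrm{mesh}(\mathcal U_j)$. Thus $\mathrm{dist}(x,x')\le\mathrm{mesh}(\mathcal U_j)$ for all $j$, and since $\mathrm{mesh}(\mathcal U_j)\to0$ we get $x=x'$. (Alternatively, using the map $\psi$ of Theorem~\ref{L:maps} together with the estimates $\mathrm{dist}(\mathrm{Id}_{X^n},\psi_j\circ\varphi_j)<\varepsilon_j$ with $\varepsilon_j\to0$, one checks that $\psi\circ\varphi(x)$ lies within $\varepsilon_j$ of $x$ for every $j$, so $\psi\circ\varphi=\mathrm{Id}_{X^n}$, which likewise forces $\varphi$ to be injective.) Since $X^n$ is compact metric and $\varprojlim_j N(\mathcal U_j)$ is Hausdorff, the continuous injection $\varphi$ is a homeomorphism onto its image $\varphi(X^n)$; consequently $\varphi^{-1}(y)$ is a single point for every $y\in\varphi(X^n)$, so $\mathrm{diam}\,\varphi^{-1}(y)=0<\varepsilon$ for every $\varepsilon>0$, i.e.\ $\varphi$ is an $\varepsilon$-map onto its image for all $\varepsilon>0$.

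I expect the real work to be concentrated in the construction step: one has to verify that the maps $\Gamma_j$ glue to a continuous map whose ``value at $t=\infty$'' is a genuine point of the inverse limit $\varprojlim_j N(\mathcal U_j)$ — not merely of the ambient closed fundamental complex $CF$ — and that the induced coordinate maps $q_j\circ\varphi$ really are subordinate to $\mathcal U_j$, so that the carrier-containment $x\in\bigcap_i U^j_{k_i}$ holds. This is a point-set matter concerning the mapping-cylinder model of the fundamental complex and the star-controlled homotopies realizing $s_j\circ\varphi_{j+1}\simeq\varphi_j$; once it is in place, the injectivity estimate and the $\varepsilon$-map conclusion follow at once.
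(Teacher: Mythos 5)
Your proposal is correct and follows essentially the same route as the paper: you build $\varphi$ by gluing the $\Gamma_j$ and passing to the inverse limit, and you obtain the $\varepsilon$-map property from the containment $\varphi^{-1}(y)\subset\varphi_j^{-1}(\mathrm{st}\langle U\rangle)\subset U$ for some $U\in\mathcal U_j$ together with $\mathrm{mesh}(\mathcal U_j)\to 0$, which is exactly the paper's Dugundji-type argument. The only (harmless) difference is that you push the same estimate one step further to conclude that $\varphi$ is actually injective, hence a homeomorphism onto its image, whereas the paper stops at $\mathrm{diam}\,\varphi^{-1}(y)\le\mathrm{mesh}(\mathcal U_j)$.
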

\begin{proof}
The maps $\Gamma_j$ can be glued to get maps 
$$X^n_l = X^n \times \left[ 0, l+1 \right] \rightarrow F^\circ_l,$$ such that 
the diagram
\begin{center}
\begin{tikzpicture}[node distance=2.5cm, auto]
  \node (Xl) {$X^n_l$};
  \node (Fl) [right of=Xl] {$F^\circ_l$};
  \node (Xl1) [below of=Xl] {$X^n_{l-1}$};
  \node (Fl1) [below of=Fl] {$F^\circ_{l-1}$};
  \draw[->, font=\small] (Xl) to node {} (Fl);
  \draw[->, font=\small] (Xl) to node {$pr$} (Xl1);
  \draw[->, font=\small] (Fl) to node {$r_l$} (Fl1);
  \draw[->, font=\small] (Xl1) to node {} (Fl1);
\end{tikzpicture}
\end{center}
commutes. 

Hence
 we get a map 
$$X^n \times \left[ 0, \infty\right] \rightarrow \underset{l}{\varprojlim} F^\circ_l$$
 which restricts to  
$$\varphi: X^n \times \{\infty\} \rightarrow \underset{j}{\varprojlim} N (\mathcal{U}_{j}).$$

If now
$$p_l :  \underset{j}{\varprojlim} N (\mathcal{U}_{j}) \rightarrow N (\mathcal{U}_{l})$$ is the projection,
 then $p_l \circ \varphi = \varphi_l$.

Let $x\in {\hbox{\rm Im}} \varphi$. Then $$x_l = p_l (x) \in N (\mathcal{U}_{l})$$ belongs to some ${\hbox{\rm st}} (\left< U\right>)$,  for some vertex
$\left< U\right>\in N (\mathcal{U}_{l})$, where $U \in \mathcal{U}_{l}$.

 Therefore 
$$\varphi^{-1} (x)\subset \varphi^{-1}_l (x_l) \subset U$$
 (cf. 
 Dugundji~\cite[Theorem~5.4, Chapter~VIII]{Dugundji73}). Hence
  $${\hbox{\rm diam}} ( \varphi^{-1}(x)) \leq {\hbox{\rm mesh}}(\mathcal{U}_{l})$$
  and since ${\hbox{\rm mesh}}(\mathcal{U}_{j}) \rightarrow 0$ for $j \rightarrow \infty$, the assertion follows.
\end{proof}
\begin{remark}
It would be interesting to know if the Bing Shrinking Criterion (cf. Marin and Visetti~\cite{bing}) can be applied to improve Theorem~\ref{6.1}.
\end{remark}
\section*{Acknowledgements}
This research was supported by the Slovenian Research Agency grants P1-0292, J1-8131, N1-0064,  N1-0083, and N1-0114.  
We very gratefully acknowledge the referee for several important comments and suggestions
which have considerably improved the presentation.

\end{document}